\newtheorem{theorem}{Theorem}[section]
\newtheorem{lemma}[theorem]{Lemma}
\newtheorem{proposition}[theorem]{Proposition}
\newtheorem{remark}[theorem]{Remark}
\newtheorem{definition}[theorem]{Definition}
\newtheorem{example}[theorem]{Example}
\theoremstyle{definition} 
\newcommand{\N}{\mathbb{N}}
\newcommand{\Z}{\mathbb{Z}}
\newcommand{\C}{\mathbb{C}}
\newcommand{\scrF}{\mathscr{F}}
\newcommand{\scrH}{\mathscr{H}}
\newcommand{\scrL}{\mathscr{L}}
\newcommand{\scrS}{\mathscr{S}}
\newcommand{\scrT}{\mathscr{T}}
\newcommand{\scrU}{\mathscr{U}}
\newcommand{\scrY}{\mathscr{Y}}
\newcommand{\z}{\mathfrak{z}}
\newcommand{\w}{\mathfrak{w}}
\newcommand{\frakI}{\mathfrak{I}}
\newcommand{\frakZ}{\mathfrak{Z}}
\begin{document}

\title{Comparing Two Generalized Noncommutative Nevanlinna-Pick Theorems}

\author{Rachael M. Norton}
\address{Department of Mathematics, University of Iowa, Iowa City, IA 52242}
\email{rachael-norton@uiowa.edu}
\keywords{Nevanlinna-Pick interpolation, displacement equation, $W^*$-correspondence, noncommutative Hardy algebra}

\begin{abstract}
We explore the relationship between two noncommutative generalizations of the classical Nevanlinna-Pick theorem: one proved by Constantinescu and Johnson in 2003 and the other proved by Muhly and Solel in 2004.  To make the comparison, we generalize Constantinescu and Johnson's theorem to the context of $W^*$-correspondences and Hardy algebras.  After formulating the so-called displacement equation in this context, we are able to follow Constantinescu and Johnson's line of reasoning in our proof.  Though our result is similar in appearance to Muhly and Solel's, closer inspection reveals differences.  Nevertheless, when the given data lie in the center of the dual correspondence, the theorems are essentially the same.          
\end{abstract}

\maketitle

\section{Introduction}

In this paper, we explore the relationship between two noncommutative generalizations of the famous Nevanlinna-Pick theorem: Constantinescu and Johnson's Theorem 3.4 in \cite{Constantinescu2003} and Muhly and Solel's Theorem 5.3 in \cite{Muhly2004a}. In Constantinescu and Johnson's theorem, the given data are $N$-tuples of operators on Hilbert space; the interpolating map is an upper triangular matrix with operator entries; and its existence depends on the positivity of the so-called Pick matrix.  Muhly and Solel, however, work in the setting of $W^*$-correspondences.  They interpolate points in the dual correspondence; the interpolating map belongs to the Hardy algebra $H^\infty(E)$ of the correspondence; and interpolation occurs when their Pick matrix is a completely positive map.  Furthermore, to prove their theorem Constantinescu and Johnson exploit the properties of the displacement equation while Muhly and Solel use the commutant lifting approach.  In order to compare the theorems, we generalize Constantinescu and Johnson's Theorem 3.4 to the context of $W^*$-correspondences and Hardy algebras.  Our proof follows the trajectory of the original proof of the theorem.  Once in this setting, we can consider the similarities and differences between the theorems. The point evaluation in Muhly and Solel's Theorem 5.3 is a homomorphism, while our point evaluation is not; it merely gives rise to an antihomomorphism on the Hardy algebra of the dual correspondence, $H^\infty(E^\sigma)$.  Furthermore, Muhly and Solel's  interpolating map belongs to $H^\infty(E)$ while ours belongs to $H^\infty(E^\sigma)$.  Nevertheless, in the case when the data lie in the center of the dual correspondence, $\frakZ(E^\sigma)$, there exists an interpolating map in $H^\infty(\frakZ(E^\sigma))$ if and only if there exists a map in $H^\infty(\frakZ(E))$ which interpolates the adjoints of the data.  Moreover, the interpolating maps are related by an isomorphism of the Hardy algebras.  Lastly, we give an equivalent condition for interpolation in terms of completely bounded maps.  

\section{Preliminaries}
\label{s: preliminaries}
Throughout this paper, $M$ will be a $W^*$-algebra.  We will think of $M$ as a $C^*$-algebra, without a preferred representation, that is also a dual space. Let $E$ denote a $W^*$-correspondence over $M$ in the sense of \cite[Definition 2.2]{Muhly2002a}.  That is, $E$ is a self-dual Hilbert $C^*$-bimodule over $M$.  The $M$-valued inner product on $E$ is full, and the left action of $M$ on $E$ is given by a faithful, normal $*$-homomorphism $\varphi:M \to \scrL(E)$, where $\scrL(E)$ denotes the $W^*$-algebra of adjointable operators on $E$.  For $k \in \N$, we form the tensor power of $E$, $E^{\otimes k}$, balanced over $M$.  $E^{\otimes k}$ is a $W^*$-correspondence over $M$ with the left action given by $\varphi_k: M \to \scrL(E^{\otimes k})$, where $\varphi_k(a)(\xi_1 \otimes \xi_2 \otimes \cdots \otimes \xi_k) = (\varphi(a)\xi_1) \otimes \xi_2 \otimes \cdots \otimes \xi_k.$ Let $E^{\otimes 0} = M$, viewed as a bimodule over itself, and define the \emph{Fock space} of $E$ to be the ultraweak direct sum $\scrF(E) = \bigoplus_{k=0}^\infty E^{\otimes k}$.  The Fock space of $E$ is also a $W^*$-correspondence over $M$.  We denote the left action of $M$ on $\scrF(E)$ by $\varphi_\infty$, defined by the formula $\varphi_\infty(a) = \text{diag}[a, \varphi(a), \varphi_2(a), \cdots ]$.  Define the \emph{(left) creation operators} $\{T_\xi \mid \xi \in E\}$ on $\scrF(E)$ by $T_\xi(\eta) = \xi \otimes \eta, \eta \in \scrF(E)$.  Matricially, 
\begin{equation} \label{eq: creationop}
T_\xi = \begin{bmatrix} 0 \\ T_\xi^{(1)} & 0 \\ & T_\xi^{(2)} & 0 \\ & & \ddots & \ddots \end{bmatrix},
\end{equation} 
where $T_\xi^{(k)}:E^{\otimes k-1} \to E^{\otimes k}$ is given by $T_\xi^{(k)}(\eta_1 \otimes \cdots \otimes \eta_{k-1}) = \xi \otimes \eta_1 \otimes \cdots \otimes \eta_{k-1}$.

The \emph{tensor algebra} over $E$, denoted $\scrT_+(E)$, is the norm-closed subalgebra of $\scrL(\scrF(E))$ generated by the left action operators $\{\varphi_\infty(a) \mid a \in M\}$ and the creation operators $\{T_\xi \mid \xi \in E\}$.  The \emph{Hardy algebra} of $E$ is the ultraweak closure of $\scrT_+(E)$ in $\scrL(\scrF(E))$ and is denoted by $H^\infty(E)$.

Let $\sigma : M \to B(H)$ be a faithful, normal representation of $M$ on a Hilbert space $H$.  Form $E \otimes_\sigma H$, the Hausdorff completion of the algebraic tensor product $E \otimes H$ in the positive semidefinite sesquilinear form defined by the formula $\langle \xi \otimes h, \eta \otimes k \rangle = \langle h, \sigma(\langle \xi, \eta \rangle) k \rangle$, for $\xi \otimes h, \eta \otimes k \in E \otimes_\sigma H$. Then $\sigma$ induces the representation $\sigma^E : \scrL(E) \to B(E \otimes_\sigma H)$ given by $\sigma^E(T) = T \otimes I_H$.  

Define the intertwining space $\frakI(\sigma, \sigma^E \circ \varphi) :=\{\eta \in B(H, E \otimes_\sigma H) \mid \eta \sigma(a) = \sigma^E \circ \varphi(a) \eta \quad \forall a \in M\}$.  For convenience, we will write $E^\sigma$ instead of $\frakI(\sigma, \sigma^E \circ \varphi)$, and we will refer to this space as the \emph{$\sigma$-dual} of $E$. $E^\sigma$ is a $W^*$-correspondence over $\sigma(M)'$, the commutant of $\sigma(M)$ in $B(H)$, under the following actions and inner product: for $a, b \in \sigma(M)'$ and $\eta, \xi \in E^\sigma$, $a \cdot \eta \cdot b := (I_E \otimes a) \eta b$ and $\langle \eta, \xi \rangle:= \eta^* \xi.$  We will denote the left action of $\sigma(M)'$ on $E^\sigma$ by $\varphi^\sigma$.  As above, form the tensor powers $(E^\sigma)^{\otimes k}, k \in \N,$ balanced over $\sigma(M)'$, and the Fock space $\scrF(E^\sigma)$.  The left action maps are denoted by $\varphi_k^\sigma$ and $\varphi_\infty^\sigma$, respectively.  Let $\iota :\sigma(M)' \to B(H)$ be the identity representation of $\sigma(M)'$ on $H$. Form $(E^\sigma)^{\otimes k} \otimes_\iota H$ and $\scrF(E^\sigma) \otimes_\iota H$. For $\eta \in E^\sigma$ and $k \in \N$, define $\eta^{(k)} \in \frakI(\sigma, \sigma^{E^{\otimes k}} \circ \varphi_k)$ by $\eta^{(k)} = (I_{E^{\otimes k-1}} \otimes \eta) (I_{E^{\otimes k-2}} \otimes \eta) \cdots (I_E \otimes \eta) \eta$.  Note that $\eta^{(k+1)} = (I_{E^{\otimes k}} \otimes \eta) \eta^{(k)}$. Then define the \emph{Cauchy Kernel} $C(\eta) \in B(H, \scrF(E) \otimes_\sigma H)$ by $C(\eta) = \begin{bmatrix} I_H & \eta & \eta^{(2)} & \eta^{(3)} & \cdots \end{bmatrix}^T$. For $\xi, \eta \in E^\sigma$, the inner product $\langle C(\xi), C(\eta) \rangle$ is given by the formula $\langle C(\xi), C(\eta) \rangle = C(\xi)^*C(\eta)$. 

In order to define a point evaluation for elements in $H^\infty(E^\sigma)$, we must first define a couple of maps.  As in \cite[Lemma 3.8]{Muhly2004a}, define $U:\scrF(E^\sigma) \otimes_\iota H \to \scrF(E) \otimes_\sigma H$ to be the Hilbert space direct sum $U = \bigoplus_{k=0}^\infty U_k$, where $U_k : (E^\sigma)^{\otimes k} \otimes_\iota H \to E^{\otimes k} \otimes_\sigma H$ is given by the formula $U_k(\eta_1 \otimes \eta_2 \otimes \cdots \otimes \eta_k \otimes h) = (I_{E^{\otimes k-1}} \otimes \eta_1)(I_{E^{\otimes k-2}} \otimes \eta_2) \cdots (I_E \otimes \eta_{k-1})\eta_kh.$ As in \cite[Theorem 3.9]{Muhly2004a}, define the ultraweakly continuous, completely isometric representation $\rho : H^\infty(E^\sigma) \to B(\scrF(E) \otimes_\sigma H)$ by 
\begin{equation} 
\label{eq: rho}
\rho(X) = U(X \otimes I_H)U^*, \quad X \in H^\infty(E^\sigma).
\end{equation}
Then for $X \in H^\infty(E^\sigma),$ we define a $\sigma(M)'$-valued point evaluation on $E^\sigma$ by the formula 
\begin{equation} \label{eq: pteval}
\hat{X}(\eta) = \langle \rho(X) C(0), C(\eta) \rangle, \quad \eta \in E^\sigma,
\end{equation}
where $C(0) = \begin{bmatrix} I_H & 0 & 0 & \cdots \end{bmatrix}^T$. Note that for $X, Y \in H^\infty(E^\sigma)$ and $\lambda \in \C$,  $\widehat{X+\lambda Y} = \hat{X} + \overline{\lambda}\hat{Y}$ since $\rho$ is linear.  While the point evaluation is not multiplicative, in Section \ref{s: remarksonpteval} we show how it gives rise to an antihomomorphism from $H^\infty(E^\sigma)$ into the completely bounded maps on $\sigma(M)'$.   

We are now ready to state our generalized Nevanlinna-Pick theorem.

\begin{theorem} 
\label{thm: NPtheorem} 
Let $E$ be a W$^*$-correspondence over a $W^*$-algebra $M$, with the left action of $M$ on $E$ given by a faithful, normal $*$-homomorphism $\varphi: M \to \scrL(E)$.  Let $\sigma$ be a faithful, normal representation of $M$ on a Hilbert space $H$.  Let $\z_1, \ldots, \z_N \in E^\sigma$ with $\|\z_i\| < 1$, for $ i=1, \ldots, N$, and 
$\Lambda_1, \ldots, \Lambda_N \in \sigma(M)'$.  There exists $X \in H^\infty(E^\sigma)$ with $\|X\| \leq 1$ such that 
\begin{eqnarray*}
\hat{X}(\z_i)=\Lambda_i, \quad i = 1, \ldots, N,
\end{eqnarray*}
if and only if the Pick matrix
\begin{eqnarray}\label{eq: pickmat}
A = \begin{bmatrix} \langle C(\z_i), C(\z_j) \rangle - \langle (I_{\scrF(E)} \otimes \Lambda_i) C(\z_i), (I_{\scrF(E)} \otimes \Lambda_j) C(\z_j) \rangle \end{bmatrix}_{i,j=1}^N
\end{eqnarray}
is positive semidefinite. \end{theorem}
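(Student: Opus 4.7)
The plan is to prove the two implications separately. The necessity direction is a standard kernel-evaluation argument. Suppose $X \in H^\infty(E^\sigma)$ with $\|X\| \le 1$ and $\hat X(\z_i) = \Lambda_i$. Since $\rho$ from (\ref{eq: rho}) is a completely isometric representation, $\|\rho(X)\| \le 1$, and so the defect operator $I - \rho(X)\rho(X)^*$ on $\scrF(E) \otimes_\sigma H$ is positive semidefinite. The key step is to establish a reproducing-kernel-type identity expressing $\rho(X)^* C(\z_i)$ in terms of $(I_{\scrF(E)} \otimes \Lambda_i)$ and $C(\z_i)$---the analogue of $M_\varphi^* k_\lambda = \overline{\varphi(\lambda)} k_\lambda$ in the classical Hardy space. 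This identity is forced by the definition $\hat X(\eta) = \langle \rho(X) C(0), C(\eta) \rangle$ combined with the relation $C(0)^* C(\z_i) = I_H$ and the recursive structure of the Cauchy kernel. Evaluating the positive operator $I - \rho(X)\rho(X)^*$ on a generic vector $\sum_i C(\z_i) h_i$ with $h_i \in H$ then produces exactly the nonnegativity of the Pick matrix (\ref{eq: pickmat}) tested against $(h_1, \ldots, h_N)$.

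For sufficiency, I would mimic the displacement-equation approach of Constantinescu and Johnson. The first step is to derive, in this $W^*$-correspondence setting, the displacement equation satisfied by the Pick matrix. Using the decomposition of the Fock space $\scrF(E) = M \oplus (E \otimes \scrF(E))$ and the recursion $C(\z_j) = e + (I_E \otimes C(\z_j)) \z_j$, where $e : H \to \scrF(E) \otimes_\sigma H$ is the degree-zero embedding, one can verify that the Pick matrix entries satisfy
\begin{equation*}
A_{ij} - \z_i^* (I_E \otimes A_{ij}) \z_j = I_H - \Lambda_i^* \Lambda_j,
\end{equation*}
the noncommutative analogue of the classical Stein equation $P - DPD^* = ee^* - ww^*$. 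Assuming $A \ge 0$ and factoring $A = T^*T$ via its operator square root, the displacement equation lets one interpret the data as producing two columns of operators with the same Gram matrix, yielding an isometry between the associated ranges. Extending this isometry to a contraction on an appropriate Fock-space module (via a Parrott-type argument) and pulling back through the completely isometric representation $\rho$ produces the desired contractive $X \in H^\infty(E^\sigma)$. The interpolation $\hat X(\z_i) = \Lambda_i$ is then recovered from the construction via the same reproducing-kernel identity used in the necessity direction.

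The main technical obstacle, and the step requiring the most care, is ensuring that the contractive operator produced from the factorization of $A$ actually lies in $\rho(H^\infty(E^\sigma))$ rather than being merely a contraction on $\scrF(E) \otimes_\sigma H$. In the classical scalar setting this is automatic because the commutant of the shift is precisely $H^\infty(\D)$, but here one must verify that the constructed operator intertwines the appropriate creation operators and respects the $\sigma(M)'$-bimodule structure of $E^\sigma$ at each tensor level. Formulating the displacement equation in a form suitable for this verification---rather than merely as a numerical identity---is the heart of adapting Constantinescu and Johnson's argument to the present setting.
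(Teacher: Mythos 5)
Your proposal follows essentially the same route as the paper: the necessity direction is the eigenvector-type identity $\rho(X)^*C(\z_i) = (I_{\scrF(E)}\otimes\Lambda_i)C(\z_i)$ (which the paper proves by a direct matrix computation with the upper-triangular Toeplitz form of $\rho(X)^*$) plugged into $A = U_\infty^*(I-T^*T)U_\infty \geq 0$, and the sufficiency direction is the same displacement-equation-plus-lurking-isometry scheme, which the paper implements via Douglas's lemma (yielding a partial isometry colligation $\Omega$) and a transfer-map realization of a contractive time-varying linear system rather than a Parrott-type extension. The intertwining issue you correctly flag as the main obstacle is resolved in the paper by showing the Douglas partial isometry can be chosen to intertwine the relevant representations (using its uniqueness and the fact that a $W^*$-algebra is generated by its unitaries), combined with the identification $\scrU_\scrT(E,H,\sigma)^* = \rho(H^\infty(E^\sigma))$ coming from Muhly and Solel's commutant theorem.
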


Note that if we set $N = n, E = \C^N, M = \C,$ and $\sigma(a) = aI_H$ for all $a \in M$, then we recover Constantinescu and Johnson's Theorem 3.4 in \cite{Constantinescu2003}. In fact, we arrived at Theorem \ref{thm: NPtheorem} by generalizing \cite[Theorem 3.4]{Constantinescu2003}, and it lends itself most naturally to a comparison with Muhly and Solel's Theorem 5.3 in \cite{Muhly2004a}.  Nevertheless, a statement that avoids $E^\sigma$ may be preferable in some cases. 

We can state Theorem \ref{thm: NPtheorem} without reference to the $\sigma$-dual as follows.  Let $F$ be a $W^*$-correspondence over a $W^*$-algebra $P$.  Let $\tau : P \to B(H)$ be a faithful, normal representation of $P$ on a Hilbert space $H$.   For $\eta \in F$ and $X \in H^\infty(F)$, define 
\begin{equation*}
\tilde{X}(\eta) := \langle (X \otimes I_H) \tilde{C}(0), \tilde{C}(\eta)\rangle
\end{equation*}
to be the $P$-valued point evaluation of $X$ at $\eta$, where $\tilde{C}(\eta) = \begin{bmatrix} I_H & L^{(1)}_\eta & L^{(2)}_{\eta^{\otimes 2}} & \cdots \end{bmatrix}^T$ and $L^{(k)}_{\eta^{\otimes k}} : H \to F^{\otimes k} \otimes_\tau H$ is given by $L^{(k)}_{\eta^{\otimes k}}(h) = \eta^{\otimes k} \otimes h.$   By Theorem 3.6 in \cite{Muhly2004a}, there exists a $W^*$-correspondence $E$ over a $W^*$-algebra $M$ and a faithful, normal representation $\sigma:M \to B(H)$ such that $F=E^\sigma, P = \sigma(M)',$ and $\tau$ is the identity map.  Let $X \in H^\infty(F) =  H^\infty(E^\sigma),\eta \in F =  E^\sigma$, and $\Lambda \in P = \sigma(M)'$.  A simple calculation shows $\tilde{C}(\eta) = U^*C(\eta),$ and it immediately follows that $\tilde{X}(\eta) = \hat{X}(\eta).$ Furthermore, Lemma 3.8 in \cite{Muhly2004a} implies that $(I_{\scrF(E)} \otimes \Lambda)C(\eta) = U(\varphi_\infty^F(\Lambda) \otimes I_H)\tilde{C}(\eta)$.  Thus we arrive at the following theorem.

\begin{theorem}
Let $F$ be a $W^*$-correspondence over a $W^*$-algebra $P$.  Let $\z_1, \ldots, \z_N \in F$ with $\| \z_i \|<1, i=1, \ldots, N$, and $\Lambda_1, \ldots, \Lambda_N \in P$.  There exists $X \in H^\infty(F)$ with $\|X\| \leq 1$ such that 
\begin{equation*}
\tilde{X}(\z_i) = \Lambda_i, \quad  i=1, \ldots, N,
\end{equation*}
if and only if the Pick matrix 
\begin{eqnarray*}
\begin{bmatrix} \langle \tilde{C}(\z_i), \tilde{C}(\z_j) \rangle - \langle (\varphi_\infty^F(\Lambda_i) \otimes I_H)\tilde{C}(\z_i), (\varphi_\infty^F(\Lambda_j) \otimes I_H)\tilde{C}(\z_j) \rangle \end{bmatrix}_{i,j=1}^N
\end{eqnarray*}
is positive semidefinite.
\end{theorem}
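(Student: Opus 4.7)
The plan is to derive this theorem as an immediate consequence of Theorem~\ref{thm: NPtheorem} via the identifications already noted in the paragraph preceding the statement. First, I would invoke Theorem 3.6 of \cite{Muhly2004a} to produce a $W^*$-correspondence $E$ over a $W^*$-algebra $M$ and a faithful, normal representation $\sigma : M \to B(H)$ such that $F = E^\sigma$, $P = \sigma(M)'$, and $\tau$ is the identity representation of $\sigma(M)'$ on $H$. In this guise the new theorem becomes a statement about $E^\sigma$ to which Theorem~\ref{thm: NPtheorem} should apply once the data are aligned.

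Next, I would reconcile the two point evaluations. Because $\tilde{C}(\eta) = U^* C(\eta)$ and $U$ is the unitary intertwining $\rho(X) = U(X \otimes I_H)U^*$, a short manipulation using the definitions in \eqref{eq: rho} and \eqref{eq: pteval} gives $\tilde{X}(\eta) = \hat{X}(\eta)$ for every $X \in H^\infty(E^\sigma)$ and $\eta \in E^\sigma$. Hence the interpolation conditions $\tilde{X}(\z_i) = \Lambda_i$ and $\hat{X}(\z_i) = \Lambda_i$ are literally the same conditions in disguise, and the constraint $\|X\|\le 1$ on the Hardy algebra is unchanged.

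Then I would check that the two Pick matrices coincide entrywise. Using $U^*U = I_{\scrF(E^\sigma) \otimes_\iota H}$, the first inner product rewrites as $\langle C(\z_i), C(\z_j)\rangle = \langle U\tilde{C}(\z_i), U\tilde{C}(\z_j)\rangle = \langle \tilde{C}(\z_i), \tilde{C}(\z_j)\rangle$. For the second, I would apply Lemma 3.8 of \cite{Muhly2004a}, which provides the identity $(I_{\scrF(E)} \otimes \Lambda) C(\eta) = U(\varphi_\infty^F(\Lambda) \otimes I_H)\tilde{C}(\eta)$, and then cancel the outer $U$'s in the same way. The entries of \eqref{eq: pickmat} thus match those of the new Pick matrix, and Theorem~\ref{thm: NPtheorem} delivers the equivalence.

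The main obstacle is purely bookkeeping: one must verify that the two Fock-space realizations, the two Cauchy kernels, and the two point evaluations are identified in a mutually compatible way, and that the representation supplied by Theorem 3.6 of \cite{Muhly2004a} really is the identity representation of $\sigma(M)'$ so that the implicit $\tau$ in $\tilde{C}$ and in $L^{(k)}_{\eta^{\otimes k}}$ agrees with $\iota$. Beyond these routine verifications there is no new mathematical content beyond Theorem~\ref{thm: NPtheorem}.
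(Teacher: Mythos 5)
Your proposal is correct and follows essentially the same route as the paper: the paper likewise derives this statement from Theorem~\ref{thm: NPtheorem} by invoking Theorem 3.6 of \cite{Muhly2004a} to realize $F = E^\sigma$ and $P = \sigma(M)'$ with $\tau$ the identity, using $\tilde{C}(\eta) = U^*C(\eta)$ to conclude $\tilde{X}(\eta) = \hat{X}(\eta)$, and using Lemma 3.8 of \cite{Muhly2004a} to identify $(I_{\scrF(E)} \otimes \Lambda)C(\eta)$ with $U(\varphi_\infty^F(\Lambda) \otimes I_H)\tilde{C}(\eta)$ so that the two Pick matrices agree entrywise. No discrepancies to report.
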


For convenience, we state the nontangential version of Muhly and Solel's generalized Nevanlinna-Pick theorem \cite[Theorem 5.3]{Muhly2004a} alongside our results.  

\begin{theorem}[{\cite[Theorem 5.3]{Muhly2004a}}] \label{thm: MSNPtheorem}
Let $E$ be a $W^*$-correspondence over a $W^*$-algebra $M$, and let $\sigma$ be a faithful, normal representation of $M$ on $H$.  Given $\z_1, \ldots, \z_N \in E^\sigma$ with $\| \z_i\| <1, i=1, \ldots, N,$ and $\Lambda_1, \ldots, \Lambda_N \in B(H)$, there exists $Y \in H^\infty(E)$ with $\|Y\| \leq1$ such that 
\begin{equation*}
\hat{Y}(\z_i^*) = \Lambda_i, \quad i=1, \ldots, N,
\end{equation*}
if and only if the map from $M_N(\sigma(M)')$ to $M_N(B(H))$ defined by 
\begin{equation*}
[B_{ij}]_{i,j=1}^N \mapsto [\langle C(\z_i), (I_{\scrF(E)} \otimes B_{ij}) C(\z_j) \rangle - \langle C(\z_i) \Lambda_i^*, (I_{\scrF(E)} \otimes B_{ij}) C(\z_j) \Lambda_j^*\rangle ]_{i,j=1}^N
\end{equation*}
is completely positive, where the point evaluation of $Y$ at $\z_i^*$ is given by the formula $\hat{Y}(\z_i^*) = \langle (Y \otimes I_H) C(0), C(\z_i) \rangle^*$.  
\end{theorem}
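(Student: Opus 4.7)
The plan is to follow the commutant lifting strategy underlying Muhly and Solel's original proof. For $Y \in H^\infty(E)$, I write $\pi(Y) := Y \otimes I_H \in B(\scrF(E) \otimes_\sigma H)$ for the induced operator. The starting point is the reproducing identity
\[
\pi(Y)^* C(\z) \;=\; C(\z)\,\hat Y(\z^*)^*, \qquad \z \in E^\sigma,\ \|\z\| < 1,
\]
which is a standard consequence of the Toeplitz structure of $H^\infty(E)$ together with the definition of $\hat Y(\z^*)$ given in the statement. It recasts each interpolation condition $\hat Y(\z_i^*) = \Lambda_i$ as the intertwining $\pi(Y)^* C(\z_i) = C(\z_i)\Lambda_i^*$, equivalently $C(\z_i)^* \pi(Y) = \Lambda_i C(\z_i)^*$.

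For the necessity direction I would substitute these intertwinings directly into the Pick map. Because $\pi(Y) = Y \otimes I_H$ and $I_{\scrF(E)} \otimes B_{ij}$ act on complementary tensor legs they commute, and the $(i,j)$-entry of the Pick map reduces to
\[
C(\z_i)^*(I_{\scrF(E)} \otimes B_{ij})\bigl(I - \pi(Y)\pi(Y)^*\bigr) C(\z_j).
\]
When $\|Y\| \leq 1$ the defect operator $K := I - \pi(Y)\pi(Y)^*$ is positive and commutes with each $I_{\scrF(E)} \otimes B_{ij}$. Setting $W_i := K^{1/2} C(\z_i)$, the Pick map equals $[B_{ij}] \mapsto [W_i^*(I_{\scrF(E)} \otimes B_{ij}) W_j]_{i,j=1}^N$, which factors as the inflation $*$-homomorphism $[B_{ij}] \mapsto [I_{\scrF(E)} \otimes B_{ij}]$ composed with compression by $\operatorname{diag}(W_1, \ldots, W_N)$, and is therefore completely positive.

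For the sufficiency direction I would set $\mathcal{M} := \bigvee_{i=1}^N \operatorname{range}(C(\z_i)) \subseteq \scrF(E) \otimes_\sigma H$; the reproducing identity shows $\mathcal{M}$ is coinvariant for $\pi(H^\infty(E))$. My aim is to manufacture a contraction $T \in B(\mathcal{M})$ satisfying $T C(\z_i)h = C(\z_i)\Lambda_i^* h$ and intertwining the compression of $\rho(H^\infty(E^\sigma))$ to $\mathcal{M}$, and then invoke Muhly and Solel's commutant lifting theorem for Hardy-algebra representations (\cite[Theorem 4.4]{Muhly2004a}) to dilate $T$ to $\pi(Y)^*$ for some $Y \in H^\infty(E)$ with $\|Y\| \leq 1$; by construction this $Y$ satisfies $\hat Y(\z_i^*) = \Lambda_i$.

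The main obstacle is converting complete positivity of the Pick map into the data $T$ demanded by commutant lifting. My plan is to extend the Pick map from $M_N(\sigma(M)')$ to $M_N(B(H))$ via Arveson's extension theorem, Stinespring-dilate the extension to obtain an auxiliary Hilbert space together with a column isometry $V$, and then use $V$ (in the spirit of the classical Parrott column extension) to produce $T$ on $\mathcal{M}$: contractivity of $T$ is built into the Stinespring geometry, and the required commutation with $\rho(H^\infty(E^\sigma))|_\mathcal{M}$ reflects the fact that the Pick map is defined on matrices with entries in the commutant $\sigma(M)'$, which is precisely the bimodule structure controlling the dual correspondence. Recovering $Y$ from $\pi(Y)^*$ at the end uses the structural fact that $\pi(H^\infty(E))$ equals the commutant of $\rho(H^\infty(E^\sigma))$ in $B(\scrF(E) \otimes_\sigma H)$, itself a central result of the Muhly-Solel framework.
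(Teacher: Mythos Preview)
The paper does not prove this theorem. Theorem~\ref{thm: MSNPtheorem} is merely \emph{quoted} from \cite[Theorem~5.3]{Muhly2004a} so that the author can compare it with her own Theorem~\ref{thm: NPtheorem}; the surrounding text says explicitly, ``For convenience, we state the nontangential version of Muhly and Solel's generalized Nevanlinna-Pick theorem \cite[Theorem 5.3]{Muhly2004a} alongside our results.'' There is therefore no ``paper's own proof'' to compare your proposal against.

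That said, your outline is in the right spirit: the paper notes in the introduction that Muhly and Solel ``use the commutant lifting approach,'' and your plan (reproducing identity $\Rightarrow$ intertwining on a coinvariant subspace $\Rightarrow$ commutant lifting via \cite[Theorem 4.4]{Muhly2004a}) is indeed the architecture of the original argument in \cite{Muhly2004a}. Your necessity direction is essentially complete. Your sufficiency direction, however, is only a plan: the passage from complete positivity of the Pick map to a well-defined contraction $T$ on $\mathcal{M}$ that commutes with the compressed $\rho(H^\infty(E^\sigma))$-action is the substantive part, and invoking ``Arveson extension plus Stinespring'' does not by itself produce an operator on $\mathcal{M}$ with the required intertwining --- one has to identify the Stinespring space with (a piece of) $\mathcal{M}$ and check that the $\sigma(M)'$-module structure survives. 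In the original \cite{Muhly2004a} this is handled directly via the kernel/reproducing structure rather than through an abstract extension theorem, so if you intend to write this up you should either consult \cite{Muhly2004a} or make that identification explicit.
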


In Section \ref{s: remarksonpteval} we compare Theorems \ref{thm: NPtheorem} and \ref{thm: MSNPtheorem}, and we give a condition for when the two theorems are equivalent.  For now we focus on one difference between the two theorems:  If the map in Theorem \ref{thm: MSNPtheorem} is completely positive, then by setting $B_{ij} = I_H$ for all $i,j=1, \ldots, N$, we see that the matrix

\begin{equation*}
[\langle C(\z_i), C(\z_j) \rangle - \langle C(\z_i) \Lambda_i^*, C(\z_j) \Lambda_j^* \rangle ]_{i,j=1}^N
\end{equation*}
is positive.  Observe that this matrix is almost identical to the Pick matrix $A$ in equation \eqref{eq: pickmat}.  Nevertheless, its positivity is a neccesary but not sufficient condition for interpolation in Theorem \ref{thm: MSNPtheorem}, while the positivity of \eqref{eq: pickmat} is a necessary and sufficient condition for interpolation in Theorem \ref{thm: NPtheorem}. The following simple example, brought to our attention by the referee, illustrates this point.  

\begin{example} \label{ex: example}
Let $Z$ and $\Lambda$ be $2 \times 2$ matrices given by
\begin{equation*}
Z = \begin{bmatrix} 0 & r \\0 & 0 \end{bmatrix}, 0 < r < 1, \text{ and } \Lambda = \begin{bmatrix} \epsilon & 0 \\ 0 & 0 \end{bmatrix}, 0 < \epsilon \leq 1.
\end{equation*}
Now consider two problems:
\begin{enumerate}
\item Find $F(z) = \sum_{n=0}^\infty A_n z^n$ in the unit ball of $H^\infty \otimes \C^{2 \times 2}$ such that
\begin{equation*}
F(Z) := \sum_{n=0}^\infty A_n Z^n = \Lambda,
\end{equation*}
where $A_n Z^n$ is given by multiplication of $2 \times 2$ matrices.

\item Find $f(z) = \sum_{n=0}^\infty a_n z^n$ in the unit ball of $H^\infty$ such that 
\begin{equation*}
f(Z) := \sum_{n=0}^\infty a_n Z^n = \Lambda,
\end{equation*}
where $a_n Z^n$ is given by scalar multiplication of a matrix.
\end{enumerate}

It can be shown that the first problem is a specific case of Constantinescu and Johnson's Theorem 3.4 in \cite{Constantinescu2003}.  Consequently, interpolation occurs if and only if $\sum_{n=0}^\infty Z^{*n}(I-\Lambda^* \Lambda)Z^n \geq 0$, which is the case since $\| \Lambda \| \leq 1$. On the other hand, the second problem is a specific case of Theorem \ref{thm: MSNPtheorem}.  One can easily check that it has no solution. One can also show that the associated map in Theorem \ref{thm: MSNPtheorem}
\begin{equation} \label{eq: map}
B \mapsto \sum_{n=0}^\infty Z^{*n} B Z^n - \Lambda^* Z^{*n} B Z^n \Lambda
\end{equation}
is not completely positive by applying Choi's criterion \cite[Theorem 2]{Choi1975}. 
\end{example}

 Thus in Example \ref{ex: example}, Constantinescu and Johnson have interpolation but Muhly and Solel do not, despite the fact that we get a positive matrix when we evaluate equation \eqref{eq: map} at $B=I_2$.

\section{Interpolating Maps}
Since the proof of Theorem \ref{thm: NPtheorem} is adapted from Constantinescu and Johnson's proof of Theorem 3.4 in \cite{Constantinescu2003}, it will be useful to restate some of their definitions in the context of $W^*$-correspondences.  In Theorem 3.4 in \cite{Constantinescu2003}, the interpolating map is a contraction that belongs to an algebra of upper triangular operators (see equation (3.1) in \cite{Constantinescu2003}).  We take this opportunity to define this algebra in our setting and examine its relationship to $H^\infty(E^\sigma)$. 

Given a $W^*$-correspondence $E$ over a $W^*$-algebra $M$ and a faithful, normal representation $\sigma$ of $M$ on a Hilbert space $H$, define $\scrU_\scrT(E, H,\sigma)$ to be the algebra of upper triangular operators $T = \begin{bmatrix} T_{ij} \end{bmatrix}_{i,j=0}^\infty \in B(\scrF(E) \otimes_\sigma H)$ such that
$T_{0j} \in \mathfrak{I}(\sigma^{E^{\otimes j}} \circ \varphi_j, \sigma),$ for $j \geq 0$, and $T_{ij} = I_E \otimes T_{i-1,j-1},$ for $1 \leq i \leq j$. That is, $T$ is a bounded, linear operator on $\scrF(E) \otimes_\sigma H$ of the form 
\begin{equation} \label{eq: Schurclass}
T = \begin{bmatrix} T_{00} & T_{01} & T_{02} & T_{03} & \cdots \\
0 & I_E \otimes T_{00} & I_E \otimes T_{01} & I_E \otimes T_{02} & \cdots \\
0 & 0 & I_{E^{\otimes 2}} \otimes T_{00} & I_{E^{\otimes 2}} \otimes T_{01} & 
\cdots \\
0 & 0 & 0 & \vdots & \ddots
\end{bmatrix},
\end{equation}
and $T_{0j} (\sigma^{E^{\otimes j}} \circ \varphi_j(a)) = \sigma(a) T_{0j}$ for all $a \in M$ and $j \geq 0.$  The collection of contractions in $\scrU_\scrT(E, H,\sigma)$ is called the \emph{Schur class} and is denoted by $\scrS(E, H,\sigma)$.  

The connection between $\scrU_\scrT(E,H,\sigma)$ and $H^\infty(E^\sigma)$ is made precise by the following lemma.

\begin{lemma}
\label{thm: equalsets}
Define $\rho:H^\infty(E^\sigma) \to B(\scrF(E) \otimes_\sigma H)$ as in equation \eqref{eq: rho}.  Then $\scrU_\scrT(E,H,\sigma)^* = \rho(H^\infty(E^\sigma))$.   
\end{lemma}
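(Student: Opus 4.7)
The plan is to prove the two inclusions $\rho(H^\infty(E^\sigma)) \subseteq \scrU_\scrT(E,H,\sigma)^*$ and $\scrU_\scrT(E,H,\sigma)^* \subseteq \rho(H^\infty(E^\sigma))$ separately, using that $\rho$ is an ultraweakly continuous, completely isometric representation (Theorem 3.9 of \cite{Muhly2004a}) and that $H^\infty(E^\sigma)$ is the ultraweak closure of the tensor algebra $\scrT_+(E^\sigma)$.

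For the first inclusion I would begin by noting that $\scrU_\scrT(E,H,\sigma)$ is ultraweakly closed in $B(\scrF(E)\otimes_\sigma H)$, since its defining conditions — the intertwining property of $T_{0j}$ and the Toeplitz-type identities $T_{ij} = I_{E^{\otimes i}}\otimes T_{0,j-i}$ — are preserved under ultraweak limits; taking adjoints then shows $\scrU_\scrT(E,H,\sigma)^*$ is also ultraweakly closed. By ultraweak continuity of $\rho$, it suffices to verify $\rho(X)^* \in \scrU_\scrT(E,H,\sigma)$ on the algebra generators $X = \varphi_\infty^\sigma(a)$ with $a\in\sigma(M)'$ and $X = T_\eta$ with $\eta\in E^\sigma$. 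Using $U = \bigoplus_k U_k$, a direct block computation yields $\rho(\varphi_\infty^\sigma(a)) = \mathrm{diag}[a,\ I_E\otimes a,\ I_{E^{\otimes 2}}\otimes a,\ldots]$, whose adjoint is block-diagonal with entries $I_{E^{\otimes k}}\otimes a^*$ and $a^*\in\sigma(M)' = \frakI(\sigma,\sigma)$; and $\rho(T_\eta)$ is strictly subdiagonal with $(k+1,k)$-block equal to $I_{E^{\otimes k}}\otimes\eta$, whose adjoint is strictly superdiagonal with entries $I_{E^{\otimes k}}\otimes\eta^*$ where $\eta^*\in\frakI(\sigma^E\circ\varphi,\sigma)$. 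Both adjoints lie in $\scrU_\scrT(E,H,\sigma)$, and stability of $\scrU_\scrT(E,H,\sigma)^*$ under sums, products, and ultraweak limits finishes this inclusion.

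For the reverse inclusion, given $T \in \scrU_\scrT(E,H,\sigma)$, the intertwining condition on $T_{0j}$ means $T_{0j}^* \in \frakI(\sigma,\sigma^{E^{\otimes j}}\circ\varphi_j)$. The isomorphism between this intertwining space and $(E^\sigma)^{\otimes j}$ — built into the definition of $U_j$ in \cite[Lemma 3.8]{Muhly2004a} — yields a unique $\xi_j\in(E^\sigma)^{\otimes j}$ with $T_{0j}^* = U_j L_{\xi_j}$, where $L_{\xi_j}(h) = \xi_j\otimes h$. I would then introduce the ``$j$-th order creation operator'' $T_{\xi_j}\in\scrT_+(E^\sigma)$ on $\scrF(E^\sigma)$ obtained by norm closure of products $T_{\eta_1}\cdots T_{\eta_j}$, verify that $\rho(T_{\xi_j})^*$ recovers the $j$-th superdiagonal of $T$, and assemble the Fej\'er--Ces\`aro partial sums $X_N = \sum_{j=0}^N \frac{N+1-j}{N+1}\,T_{\xi_j}\in\scrT_+(E^\sigma)$. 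The Fej\'er construction applied via the gauge automorphism $W_\theta$ of $\scrF(E)\otimes_\sigma H$ (scaling the $k$-th summand by $e^{ik\theta}$) gives $\|\rho(X_N)^*\|\leq\|T\|$, and the complete isometry of $\rho$ then transports this to $\|X_N\|\leq\|T\|$. Weak-$*$ compactness produces an ultraweak cluster point $X \in H^\infty(E^\sigma)$, and a Fourier-coefficient comparison shows $\rho(X)^* = T$.

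The main obstacle is this reverse direction, specifically controlling the norm of the reconstructed operator $X$ and certifying that it belongs to the ultraweak closure of $\scrT_+(E^\sigma)$ rather than merely to $B(\scrF(E^\sigma))$. The Fej\'er-kernel approach — standard in classical $H^\infty$ theory, here adapted via the circle action on the Fock space — is the natural tool, but it requires some care to implement alongside the noncommutative Toeplitz structure of $\scrU_\scrT$ and the $(E^\sigma)^{\otimes j}$-identification of intertwiner spaces. Once those are in place, the first inclusion is largely a matter of unpacking $U$ on generators.
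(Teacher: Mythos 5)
Your proposal is correct, and your first inclusion ($\rho(H^\infty(E^\sigma)) \subseteq \scrU_\scrT(E,H,\sigma)^*$) is essentially the argument in the paper: compute $\rho$ on the generators $\varphi_\infty^\sigma(a)$ and $T_\eta$, observe the adjoints have the required Toeplitz form and intertwining properties, and use that $\scrU_\scrT(E,H,\sigma)$ is an ultraweakly closed algebra. Where you genuinely diverge is the reverse inclusion. The paper does not reconstruct an element of $H^\infty(E^\sigma)$ at all: it invokes the identification $\rho(H^\infty(E^\sigma)) = \sigma^{\scrF(E)}(H^\infty(E))'$ from \cite[Theorem 3.9]{Muhly2004a}, so that $\scrU_\scrT(E,H,\sigma)^* \subseteq \rho(H^\infty(E^\sigma))$ reduces to checking that $T^*$ commutes with $\sigma^{\scrF(E)}(\varphi_\infty(a))$ and $\sigma^{\scrF(E)}(T_\xi)$ --- a short block-matrix computation driven by the conditions $T_{0j} \in \frakI(\sigma^{E^{\otimes j}}\circ\varphi_j,\sigma)$ and $T_{ij} = I_{E^{\otimes i}}\otimes T_{0,j-i}$. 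Your route instead identifies each $T_{0j}^* \in \frakI(\sigma,\sigma^{E^{\otimes j}}\circ\varphi_j) \cong (E^\sigma)^{\otimes j}$ with a generalized creation operator and recovers the preimage by Ces\`aro summation against the gauge action; this works and is more self-contained in that it exhibits the preimage explicitly, but it amounts to re-proving the harmonic-analysis half of Muhly and Solel's commutant theorem rather than citing it, and it is considerably longer. Two small points to tighten if you pursue your version: a general $\xi_j$ in the self-dual tensor power $(E^\sigma)^{\otimes j}$ is only an ultraweak (not norm) limit of sums of elementary tensors, so $T_{\xi_j}$ and hence $X_N$ live in $H^\infty(E^\sigma)$ rather than $\scrT_+(E^\sigma)$ (harmless for the conclusion, since $H^\infty(E^\sigma)$ is ultraweakly closed and $\|X_N\|\le\|T\|$); and you should say explicitly that the Fej\'er means $\Sigma_N(T)$ converge to $T$ ultraweakly, which is what lets the Fourier-coefficient comparison close the argument.
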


\begin{proof} 
In \cite[Theorem 3.9]{Muhly2004a}, Muhly and Solel showed that $\rho(H^\infty(E^\sigma)) = \sigma^{\scrF(E)}(H^\infty(E))'.$  To show that $\scrU_\scrT(E,H,\sigma)^* \subseteq \rho(H^\infty(E^\sigma))$, it suffices to show that every element of $\scrU_\scrT(E,H,\sigma)^*$ commutes with the generators of $ \sigma^{\scrF(E)}(H^\infty(E))$.  That is, we must show that every element of $\scrU_\scrT(E,H,\sigma)^*$ commutes with $\sigma^{\scrF(E)}(\varphi_\infty(a)), a \in M,$ and with $\sigma^{\scrF(E)}(T_\xi), \xi \in E$. 

For $a \in M$ and $T \in \scrU_\scrT(E,H,\sigma)$,
\begin{multline*}
T^* \circ \sigma^{\scrF(E)}(\varphi_\infty(a)) = 
\begin{bmatrix} T_{00}^* & 0 & 0 & \cdots \\ T_{01}^* & I_E \otimes T_{00}^* & 0 & \cdots \\
T_{02}^* & I_E \otimes T_{01}^* & I_{E^{\otimes 2}} \otimes T_{00}^* & \cdots \\
\vdots & \vdots & \vdots & \ddots 
\end{bmatrix} 
\begin{bmatrix} a \otimes I_H \\ & \varphi(a) \otimes I_H \\ & & \varphi_2(a) \otimes I_H \\ & & & \ddots \end{bmatrix} \\
= \begin{bmatrix} T_{00}^* \sigma(a)  & 0 & 0 & \cdots \\ T_{01}^* \sigma(a) & \varphi(a) \otimes T_{00}^* & 0 & \cdots \\ T_{02}^*\sigma(a) & \varphi(a) \otimes T_{01}^* & \varphi_2(a) \otimes T_{00}^* & \cdots \\ \vdots & \vdots & \vdots & \ddots 
\end{bmatrix}
= \sigma^{\scrF(E)}(\varphi_\infty(a)) \circ T^*
\end{multline*}
since $T_{0j} \in \mathfrak{I}(\sigma^{E^{\otimes j}} \circ \varphi_j, \sigma)$ and $a \otimes I_H = \sigma(a)$.

For $\xi \in E$ and $T \in \scrU_\scrT(E,H,\sigma)$,
\begin{multline*}
T^* \circ \sigma^{\scrF(E)}(T_\xi) = 
\begin{bmatrix} T_{00}^* & 0 & 0 & \cdots \\ T_{01}^* & I_E \otimes T_{00}^* & 0 & \cdots \\
T_{02}^* & I_E \otimes T_{01}^* & I_{E^{\otimes 2}} \otimes T_{00}^* & \cdots \\
\vdots & \vdots & \vdots & \ddots 
\end{bmatrix} 
\begin{bmatrix} 0 & 0 & 0 & \cdots \\T_\xi^{(1)} \otimes I_H & 0 & 0 & \cdots \\ 0 & T_\xi^{(2)} \otimes I_H & 0 & \cdots \\ \vdots & \vdots & \vdots & \ddots
\end{bmatrix} \\
= \begin{bmatrix} 0 & 0 & 0 & \cdots \\ (I_E \otimes T_{00}^*)(T_\xi^{(1)} \otimes I_H) & 0 & 0 & \cdots \\ (I_E \otimes T_{01}^*)(T_\xi^{(1)} \otimes I_H) & (I_{E^{\otimes 2}} \otimes T_{00}^*)(T_\xi^{(2)} \otimes I_H) & 0 & \cdots \\
(I_E \otimes T_{02}^*)(T_\xi^{(1)} \otimes I_H) & (I_{E^{\otimes 2}} \otimes T_{01}^*)(T_\xi^{(2)} \otimes I_H) & (I_{E^{\otimes 3}} \otimes T_{00}^*)(T_\xi^{(3)} \otimes I_H) & \cdots \\
\vdots & \vdots & \vdots & \ddots \end{bmatrix} \\
= \sigma^{\scrF(E)}(T_\xi) \circ T^*
\end{multline*}
because 
\begin{multline*}
(I_{E^{\otimes k}} \otimes T_{0,i-k}^*)(T_\xi^{(k)} \otimes I_H)(\eta_1 \otimes \cdots \otimes \eta_{k-1} \otimes h) = (I_{E^{\otimes k}} \otimes T_{0,i-k}^*)(\xi \otimes \eta_1 \otimes \cdots \otimes \eta_{k-1} \otimes h) \\
= \xi \otimes \eta_1 \otimes \cdots \otimes \eta_{k-1} \otimes T_{0,i-k}^*(h) 
= (T_\xi^{(i)} \otimes I_H)(I_{E^{\otimes k-1}} \otimes T_{0,i-k}^*)(\eta_1 \otimes \cdots \eta_{k-1} \otimes h).
\end{multline*}
For the other inclusion, note that it is a consequence of \cite[Theorem 3.9]{Muhly2004a} that $\rho(\varphi_\infty^\sigma(a)) = I_{\scrF(E)} \otimes a, a \in \sigma(M)',$ and 
\begin{equation*}
\rho(T_\eta) = \begin{bmatrix} 0 \\ \eta & 0 \\ & I_E \otimes \eta & 0 \\ & & I_{E^{\otimes 2}} \otimes \eta & \ddots \\ & & & \ddots \end{bmatrix}, \quad \eta \in E^\sigma.
\end{equation*}
Now it is easy to see that $\rho(\varphi_\infty^\sigma(a))$ and $\rho(T_\eta)$ are elements of $\scrU_\scrT(E,H,\sigma)^*$.
\end{proof}

Since $\scrU_\scrT(E,H,\sigma)^* = \rho(H^\infty(E^\sigma))$, we define the point evaluation of an element in $\scrU_\scrT(E,H,\sigma)$ at a point in $E^\sigma$ to agree with equation \eqref{eq: pteval}. That is, for $T \in \scrU_\scrT(E,H,\sigma)$ and $\eta \in E^\sigma$, $T(\eta) \in \sigma(M)'$ is given by the formula
\begin{equation*}
T(\eta) = \langle C(0), T C(\eta) \rangle.
\end{equation*}

The following result provides more information about the point evaluation and will be useful in the proof of Theorem \ref{thm: NPtheorem}.
\begin{lemma} \label{thm: pteval} If $T \in \scrU_\scrT(E, H,\sigma)$ and $\eta \in E^\sigma$, then $TC(\eta) = (I_{\scrF(E)} \otimes T(\eta))C(\eta)$. \end{lemma}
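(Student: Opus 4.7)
The plan is to verify the identity componentwise using the matrix form of $T$ given in \eqref{eq: Schurclass}. Since $C(\eta) = \begin{bmatrix} I_H & \eta & \eta^{(2)} & \cdots \end{bmatrix}^T$ and $T_{ij} = I_{E^{\otimes i}} \otimes T_{0,j-i}$ for $i \leq j$ (with $T_{ij}=0$ for $i > j$), the $i$-th entry of $TC(\eta)$ is
\[
(TC(\eta))_i \;=\; \sum_{j \geq i} T_{ij}\,\eta^{(j)} \;=\; \sum_{k \geq 0} \bigl(I_{E^{\otimes i}} \otimes T_{0k}\bigr)\,\eta^{(i+k)}.
\]
Since $T(\eta) \in \sigma(M)'$, the operator $I_{\scrF(E)}\otimes T(\eta)$ acts diagonally on the Fock space decomposition, so
\[
\bigl((I_{\scrF(E)}\otimes T(\eta))C(\eta)\bigr)_i \;=\; \bigl(I_{E^{\otimes i}} \otimes T(\eta)\bigr)\,\eta^{(i)}.
\]
Matching these two expressions for every $i \geq 0$ will establish the lemma.

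The key step will be the factorization
\[
\eta^{(i+k)} \;=\; \bigl(I_{E^{\otimes i}} \otimes \eta^{(k)}\bigr)\,\eta^{(i)}, \qquad i,k \geq 0,
\]
which I would prove by induction on $k$, using the recursion $\eta^{(n+1)} = (I_{E^{\otimes n}}\otimes \eta)\eta^{(n)}$ noted in the preliminaries together with the associativity identification $I_{E^{\otimes i}}\otimes(I_{E^{\otimes k}}\otimes \eta) = I_{E^{\otimes(i+k)}}\otimes \eta$. Inserting this factorization into the expression for $(TC(\eta))_i$ above and pulling $I_{E^{\otimes i}}\otimes(\,\cdot\,)$ outside the sum yields
\[
\sum_{k \geq 0}\bigl(I_{E^{\otimes i}}\otimes T_{0k}\eta^{(k)}\bigr)\,\eta^{(i)} \;=\; \Bigl(I_{E^{\otimes i}}\otimes \sum_{k\geq 0} T_{0k}\eta^{(k)}\Bigr)\eta^{(i)} \;=\; \bigl(I_{E^{\otimes i}}\otimes T(\eta)\bigr)\eta^{(i)},
\]
since $\sum_{k\geq 0} T_{0k}\eta^{(k)}$ is exactly the $0$-th component of $TC(\eta)$, which by the definition preceding the lemma equals $\langle C(0), TC(\eta)\rangle = T(\eta)$.

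I expect the main obstacle to be bookkeeping rather than insight: one must track which tensor products are $M$-balanced versus $\sigma$-balanced, keep the associativity $E^{\otimes i}\otimes(E^{\otimes k}\otimes_\sigma H) \cong E^{\otimes(i+k)}\otimes_\sigma H$ consistent throughout the induction, and confirm that $T(\eta) \in \sigma(M)'$ so that $I_{\scrF(E)}\otimes T(\eta)$ is a well-defined operator on $\scrF(E)\otimes_\sigma H$. The latter follows from combining the intertwining properties $T_{0k}\in\frakI(\sigma^{E^{\otimes k}}\circ\varphi_k,\sigma)$ and $\eta^{(k)}\in\frakI(\sigma,\sigma^{E^{\otimes k}}\circ\varphi_k)$, which force each summand $T_{0k}\eta^{(k)}$ to commute with $\sigma(M)$. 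Convergence of the series involved is ensured by the boundedness of $T$ on $\scrF(E)\otimes_\sigma H$ and of $C(\eta)$ on $H$.
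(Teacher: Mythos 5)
Your proposal is correct and follows essentially the same route as the paper's proof: both compare $TC(\eta)$ and $(I_{\scrF(E)}\otimes T(\eta))C(\eta)$ entry by entry and reduce the matching to the factorization $\eta^{(i+k)}=(I_{E^{\otimes i}}\otimes\eta^{(k)})\eta^{(i)}$, which the paper obtains by writing $\eta^{(k)}$ out as a product of the factors $I_{E^{\otimes j}}\otimes\eta$ and tensoring each with $I_{E^{\otimes i}}$ (equivalent to your induction). The only difference is cosmetic --- you rewrite the left-hand side toward the right while the paper goes the other way --- so no changes are needed.
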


\begin{proof}
Expand the left hand side:
\begin{equation*}
TC(\eta) = \begin{bmatrix} T_{00} & T_{01} & T_{02} & \cdots \\ 
0 & I_E \otimes T_{00} & I_E \otimes T_{01} & \cdots \\
0 & 0 & I_{E^{\otimes 2}} \otimes T_{00} & \cdots \\
\vdots & \vdots & \vdots & \ddots
\end{bmatrix} \begin{bmatrix} I_H \\ \eta \\ \eta^{(2)} \\ \vdots \end{bmatrix} 
= \begin{bmatrix} \sum_{r=0}^\infty T_{0r}\eta^{(r)} \\
 \sum_{r=0}^\infty (I_E \otimes T_{0r})\eta^{(r+1)} \\
 \sum_{r=0}^\infty (I_{E^{\otimes 2}} \otimes T_{0r})\eta^{(r+2)} \\
\vdots
\end{bmatrix}.
\end{equation*}
Expand the right hand side:
\begin{multline*}
(I_{\scrF(E)} \otimes T(\eta))C(\eta) = \begin{bmatrix} T(\eta) \\ & I_E \otimes T(\eta) \\ & & I_{E^{\otimes 2}} \otimes T(\eta) \\ & & & \ddots \end{bmatrix} 
\begin{bmatrix} I_H \\ \eta \\ \eta^{(2)} \\ \vdots \end{bmatrix} 
= \begin{bmatrix} T(\eta) \\ (I_E \otimes T(\eta))\eta \\ (I_{E^{\otimes 2}} \otimes T(\eta))\eta^{(2)} \\ \vdots \end{bmatrix}.
\end{multline*}
The $k^\text{th}$ entry of the right hand side is
\begin{multline*} 
(I_{E^{\otimes k}} \otimes T(\eta))\eta^{(k)} = (I_{E^{\otimes k}} \otimes C(0)^*TC(\eta))\eta^{(k)} 
= (I_{E^{\otimes k}} \otimes\sum_{r=0}^\infty T_{0r}\eta^{(r)})\eta^{(k)} \\
= \sum_{r=0}^\infty \big(I_{E^{\otimes k}} \otimes (T_{0r}(I_{E^{\otimes r-1}} \otimes \eta) \ldots (I_E \otimes \eta)\eta)\big)\eta^{(k)} \\
= \sum_{r=0}^\infty (I_{E^{\otimes k}} \otimes T_{0r})(I_{E^{\otimes k}} \otimes I_{E^{\otimes r-1}} \otimes \eta) \ldots (I_{E^{\otimes k}} \otimes I_E \otimes \eta)(I_{E^{\otimes k}} \otimes \eta)\eta^{(k)}
= \sum_{r=0}^\infty (I_{E^{\otimes k}} \otimes T_{0r})\eta^{(r+k)}
\end{multline*}
which agrees with the $k^\text{th}$ entry of the left hand side.
\end{proof}

\section{Displacement Equation}
\label{s: diseq}
The displacement equation was originally defined by Kailath, Kung, and Morf in \cite{Kailath1979}, and it was used to measure the extent to which a matrix was Toeplitz (see also \cite{Kailath1995}). We are interested in a displacement equation of the form 
\begin{equation*}
A-\theta(A) = B,
\end{equation*}
where $\theta$ is a completely positive, contractive map.  In this case, one can solve for the unique solution $A$ by computing the resolvent, $A = (I- \theta)^{-1}(B)$.

In order to apply the displacement theory to our context, we first fix $\z_1, \z_2, \ldots, \z_N \in E^\sigma$ with $\|\z_i\| <1, i =1, \ldots, N,$ and $\Lambda_1, \ldots, \Lambda_N \in \sigma(M)'$, and we form the matrices 
\begin{equation} \label{eq: interdata}
U = \begin{bmatrix} I_H \\ \vdots \\ I_H \end{bmatrix}, V = \begin{bmatrix} \Lambda_1^* \\ \vdots \\ \Lambda_N^* \end{bmatrix}, \text{ and } \z = \begin{bmatrix} \z_1 \\ & \ddots \\ & & \z_N \end{bmatrix}.
\end{equation}
For the remainder of this section, we reserve this notation for these specific matrices.  We emphasize that $U$ defined in equation \eqref{eq: interdata} is in accord with \cite{Constantinescu2003} and should not be confused with the isomorphism in equation \eqref{eq: rho}. 

 Let $H^{(N)}$ denote $H \otimes \C^N$, and let $\sigma^{(N)} : M \to B(H^{(N)})$ be the representation of $M$ on $H^{(N)}$ given by the $N \times N$ diagonal matrix
\begin{equation*}
\sigma^{(N)}(a) = \begin{bmatrix} \sigma(a) \\ & \ddots \\ & & \sigma(a) \end{bmatrix}, \quad a \in M.
\end{equation*}
As in Section \ref{s: preliminaries}, define the intertwining space $\frakI(\sigma^{(N)}, (\sigma^{(N)})^E \circ \varphi) := \{\eta \in B( H^{(N)}, E \otimes_{\sigma^{(N)}} H^{(N)}) \mid \eta \sigma^{(N)}(a) = (\sigma^{(N)})^E \circ \varphi(a) \eta \quad \forall a \in M\}$. Also define $\eta^{(k)}$ and the Cauchy Kernel $C(\eta)$ for $\eta \in \frakI(\sigma^{(N)}, (\sigma^{(N)})^E \circ \varphi)$. Observe that $\z$ from equation \eqref{eq: interdata} belongs to $\frakI(\sigma^{(N)}, (\sigma^{(N)})^E \circ \varphi)$ and $\|\z\| <1$.  Consider the displacement equation 
\begin{equation} \label{eq: diseq}
A - \z^*(I_E \otimes A)\z = UU^* - VV^*.
\end{equation} 
Equation \eqref{eq: diseq} admits a unique solution $A \in \sigma^{(N)}(M)'$.  To see this, define $\theta_\z: \sigma^{(N)}(M)' \to \sigma^{(N)}(M)'$ by $\theta_\z(B) = \z^*(I_E \otimes B)\z$. 
Then for $k \in \N$, $\theta_\z^k(B) = (\z^{(k)})^*(I_{E^{\otimes 
k}} \otimes B)\z^{(k)}$. Since $\|\theta_\z\| \leq \|\z \|^2 <1,$ $(I_{B(H^{(N)})}-\theta_\z)^{-1}$ is a completely bounded map on $\sigma^{(N)}(M)'$. Consequently, we can solve equation \eqref{eq: diseq} for $A$:
\begin{multline*}
A = (I_{B(H^{(N)})} - \theta_\z)^{-1}(UU^* - VV^*) = \sum_{k=0}^\infty \theta_\z^k(UU^*-VV^*) 
= \sum_{k=0}^\infty (\z^{(k)})^*(I_{E^{\otimes k}} \otimes (UU^*-VV^*))\z^{(k)} \\
= \sum_{k=0}^\infty  (\z^{(k)})^*(I_{E^{\otimes k}} \otimes 
UU^*)\z^{(k)} - \sum_{k=0}^\infty  (\z^{(k)})^*(I_{E^{\otimes k}} 
\otimes VV^*)\z^{(k)} \\
= C(\z)^*(I_{\scrF(E)} \otimes UU^*)C(\z) - C(\z)^*(I_{\scrF(E)} \otimes VV^*) C(\z) \\
= \begin{bmatrix} \langle C(\z_i), C(\z_j) \rangle - \langle (I_{\scrF(E)} \otimes \Lambda_i) C(\z_i), (I_{\scrF(E)} \otimes \Lambda_j) C(\z_j) \rangle \end{bmatrix}_{i,j=1}^N,
\end{multline*}
which is the Pick matrix from equation \eqref{eq: pickmat}.

In the proof of Theorem \ref{thm: NPtheorem}, it will be convenient to write $A$ in terms of different notation.  Thus define two maps $U_\infty^*$ and $V_\infty^*$ both from $\scrF(E) \otimes H$ to $H^{(N)}$ by
\begin{eqnarray*}
U_\infty^* &=& \begin{bmatrix} U & \z^*(I_E \otimes U) & 
(\z^{(2)})^*(I_{E^{\otimes 2}} \otimes U) & \ldots \end{bmatrix} \\
V_\infty^* &=& \begin{bmatrix} V & \z^*(I_E \otimes V) & 
(\z^{(2)})^*(I_{E^{\otimes 2}} \otimes V) & \ldots \end{bmatrix}.
\end{eqnarray*}
Then $A = U_\infty^* U_\infty - V_\infty^* V_\infty$.  Note that we may rewrite $U_\infty$ and $V_\infty$ in terms of the Cauchy kernels as follows:
\begin{equation*} 
U_\infty = \begin{bmatrix} C(\z_1) & \cdots & C(\z_N) \end{bmatrix}
\end{equation*}
and  
\begin{equation*} 
V_\infty = \begin{bmatrix} (I_{\scrF(E)} \otimes \Lambda_1)C(\z_1) & \cdots & (I_{\scrF(E)} \otimes \Lambda_N)C(\z_N) \end{bmatrix}.
\end{equation*}

These observations will be useful later, so we summarize them in the following remark.

\begin{remark} \label{rk: pickmatrix} The Pick matrix \eqref{eq: pickmat} is the unique solution to the displacement equation \eqref{eq: diseq}, and it may be written in the form $A = U_\infty^* U_\infty - V_\infty^* V_\infty$, where $U_\infty = \begin{bmatrix} C(\z_1) & \cdots & C(\z_N) \end{bmatrix}$ and $V_\infty = \begin{bmatrix} (I_{\scrF(E)} \otimes \Lambda_1)C(\z_1) & \cdots & (I_{\scrF(E)} \otimes \Lambda_N)C(\z_N) \end{bmatrix}.$ \end{remark}

The following lemma is the crux of the proof of Theorem \ref{thm: NPtheorem}.  It relates the positivity of the Pick matrix to the existence of a special element in the Schur class, $\scrS(E, H,\sigma)$.  Recall that $\scrS(E, H,\sigma)$ is defined to be the collection of contractive upper triangular operators $T=[T_{ij}]_{i,j=0}^\infty$ of the form \eqref{eq: Schurclass} and satisfying $T_{0j} (\sigma^{E^{\otimes j}} \circ \varphi_j(a)) = \sigma(a) T_{0j}$ for all $a \in M$ and $j \geq 0.$ 

\begin{lemma} \label{thm: biglemma} The solution to the displacement equation \eqref{eq: diseq} is positive semidefinite if and only if there exists $T \in \scrS(E, H,\sigma)$ such that $TU_\infty = V_\infty$. \end{lemma}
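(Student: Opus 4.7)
The easy direction is immediate: if $T \in \scrS(E,H,\sigma)$ satisfies $TU_\infty = V_\infty$, then $V_\infty^*V_\infty = U_\infty^* T^*T U_\infty \leq U_\infty^*U_\infty$, so by Remark~\ref{rk: pickmatrix} the Pick matrix $A = U_\infty^*U_\infty - V_\infty^*V_\infty$ is positive semidefinite. For the converse, I would start by letting $R = A^{1/2}$, which lies in $\sigma^{(N)}(M)'$ because $A$ does, and rewriting the displacement equation~\eqref{eq: diseq} as $R^*R + VV^* = \z^*(I_E \otimes R^*R)\z + UU^*$, i.e.\ $W_2^*W_2 = W_1^*W_1$ for the column operators
\[
W_1 = \begin{bmatrix} (I_E \otimes R)\z \\ U^* \end{bmatrix} \colon H^{(N)} \to (E \otimes_{\sigma^{(N)}} H^{(N)}) \oplus H, \qquad W_2 = \begin{bmatrix} R \\ V^* \end{bmatrix} \colon H^{(N)} \to H^{(N)} \oplus H.
\]

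Using $\z \in \frakI(\sigma^{(N)}, (\sigma^{(N)})^E \circ \varphi)$, $R \in \sigma^{(N)}(M)'$, and the fact that each $\Lambda_i \in \sigma(M)'$ (so that $U, V$ intertwine $\sigma$ with $\sigma^{(N)}$), both $W_1$ and $W_2$ intertwine $\sigma^{(N)}$ with the natural direct-sum representations on their targets. I would then define a partial-isometric intertwiner $\tilde V \colon (E \otimes_{\sigma^{(N)}} H^{(N)}) \oplus H \to H^{(N)} \oplus H$ by $\tilde V(W_1 h) = W_2 h$ on $\overline{\mathrm{range}(W_1)}$ (well defined and isometric by the norm equality) and by $0$ on the orthogonal complement (still intertwining, since invariance of $\overline{\mathrm{range}(W_1)}$ under the representation implies invariance of its complement). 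Writing $\tilde V = \begin{bmatrix} \alpha & \beta \\ \gamma & \delta \end{bmatrix}$, each block inherits an intertwining property from $\tilde V$; in particular $\delta \in \sigma(M)'$.

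Next I would introduce the transfer-function entries
\[
T_{00} = \delta, \qquad T_{0j} = \gamma (I_E \otimes \alpha)(I_{E^{\otimes 2}} \otimes \alpha) \cdots (I_{E^{\otimes j-1}} \otimes \alpha)(I_{E^{\otimes j}} \otimes \beta) \quad (j \geq 1),
\]
and extend to the upper-triangular $T = [T_{ij}]$ by setting $T_{ij} = I_{E^{\otimes i}} \otimes T_{0,j-i}$ for $i \leq j$. The intertwining properties of $\alpha, \beta, \gamma, \delta$ translate immediately into $T_{0j}(\sigma^{E^{\otimes j}} \circ \varphi_j(a)) = \sigma(a) T_{0j}$, placing $T$ in the form~\eqref{eq: Schurclass}. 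To verify $TU_\infty = V_\infty$, I would expand $\tilde V W_1 = W_2$ to obtain the pair $R = \alpha(I_E \otimes R)\z + \beta U^*$ and $V^* = \gamma(I_E \otimes R)\z + \delta U^*$, then iteratively substitute the first identity into the second. This produces the partial sum $V^* = \sum_{j=0}^{m} T_{0j}(I_{E^{\otimes j}} \otimes U^*) \z^{(j)}$ plus a tail of the form $\gamma (I_E \otimes \alpha) \cdots (I_{E^{\otimes m}} \otimes \alpha)(I_{E^{\otimes m+1}} \otimes R)\z^{(m+1)}$; since $\|\z\| < 1$ and $\|\alpha\|, \|\gamma\| \leq 1$, the tail tends to zero in norm, so $V^* = \sum_{j=0}^\infty T_{0j}(I_{E^{\otimes j}} \otimes U^*) \z^{(j)}$. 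Reading off the $i$-th block column gives $\Lambda_i = \sum_j T_{0j}\z_i^{(j)} = T(\z_i)$, and Lemma~\ref{thm: pteval} converts this to $TC(\z_i) = (I_{\scrF(E)} \otimes \Lambda_i) C(\z_i)$, i.e.\ $TU_\infty = V_\infty$.

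The principal technical obstacle will be establishing $\|T\| \leq 1$ on $\scrF(E) \otimes_\sigma H$. This is the $W^*$-correspondence analogue of the classical Potapov--Schur fact that a contractive colligation produces a contractive transfer function. My plan is to derive it by organising $I - T^*T$ as a positive telescoping sum on $\scrF(E) \otimes_\sigma H$ constructed from the defect $I - \tilde V^* \tilde V \geq 0$, or, alternatively, by first dilating $\tilde V$ to a unitary colligation on an enlarged Hilbert space and then realising $T$ as the compression of the manifestly isometric transfer operator of the dilation; either route mirrors the colligation/transfer-function machinery used in Constantinescu and Johnson's original proof.
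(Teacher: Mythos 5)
Your proposal is correct and follows essentially the same route as the paper's proof: factor the Pick matrix, recast the displacement equation as $W_1^*W_1 = W_2^*W_2$, extract a partial-isometric colligation (your direct construction of $\tilde V$ on $\overline{\mathrm{range}(W_1)}$ together with the reducing-subspace argument for the intertwining is a cosmetic variant of the paper's appeal to Douglas's lemma and its uniqueness), iterate the state equation with a vanishing tail, and read off the upper-triangular transfer operator with the same entries $T_{0j}$. The one step you only sketch, $\|T\| \leq 1$, is precisely the paper's Proposition \ref{thm: transmap} (the transfer map of a contractive time-varying linear system is a contraction, proved by the telescoping energy inequality $\|x(t)\|^2 + \|y(t)\|^2 \leq \|x(t+1)\|^2 + \|u(t)\|^2$), so the first of your two proposed routes goes through verbatim.
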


In order to prove Lemma \ref{thm: biglemma}, we will need the following two propositions.  Proposition \ref{thm: transmap} is a result about transfer maps of time varying systems.  The \emph{state-space model} of a discrete time varying linear system is defined by an equation of the form 
\begin{equation} \label{eq: timevarying}
\begin{bmatrix} x(t) \\ y(t) \end{bmatrix} = \begin{bmatrix} A(t) & B(t) \\ C(t) & D(t) \end{bmatrix} \begin{bmatrix} x(t+1) \\ u(t) \end{bmatrix}, \quad t \in \Z,
\end{equation}
where $\{\scrU(t)\}_{t \in \Z}, \{\scrY(t)\}_{t \in \Z}$, and $\{\scrH(t)\}_{t \in \Z}$ are given families of Hilbert spaces called the input, output, and state spaces, respectively, and $u(t) \in \scrU(t), y(t) \in \scrY(t),$ and $x(t) \in \scrH(t)$ for all $t \in \Z$.  The operators $A(t) \in B(\scrH(t+1), \scrH(t)), B(t) \in B(\scrU(t), \scrH(t)), C(t) \in B(\scrH(t+1), \scrY(t))$, and $D(t) \in B(\scrU(t), \scrY(t))$ are also given.  The system \eqref{eq: timevarying} is said to be \emph{contractive} if $\left\| \begin{bmatrix} A(t) & B(t) \\ C(t) & D(t) \end{bmatrix} \right\| \leq 1$ for all $t \in \Z$.  The operators $A(t), B(t), C(t),$ and $D(t)$ uniquely determine the so-called \emph{transfer map} of the system, an operator $T: \bigoplus_{t \in \Z} \scrU(t) \to \bigoplus_{t \in \Z} \scrY(t)$ that satisfies $T(u(t))_{t \in \Z} = (y(t))_{t \in \Z}$.  For more on time varying linear systems, see \cite[Section 2.3]{Constantinescu1996}. The following result is derived from the proof of Lemma 3.1 in \cite[Section 2.3]{Constantinescu1996}. 

\begin{proposition} \label{thm: transmap} The transfer map of a contractive time varying linear system is a contraction.  \end{proposition}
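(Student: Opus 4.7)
The plan is to exploit the pointwise contractivity of each junction $\begin{bmatrix} A(t) & B(t) \\ C(t) & D(t) \end{bmatrix}$ and assemble the local energy estimates into a global one via a telescoping sum. Since each junction is a contraction from $\scrH(t+1) \oplus \scrU(t)$ to $\scrH(t) \oplus \scrY(t)$, any trajectory $(x(\cdot), u(\cdot), y(\cdot))$ must satisfy the pointwise energy balance
\begin{equation*}
\|x(t)\|^2 + \|y(t)\|^2 \leq \|x(t+1)\|^2 + \|u(t)\|^2, \quad t \in \Z.
\end{equation*}
Rearranged as $\|y(t)\|^2 - \|u(t)\|^2 \leq \|x(t+1)\|^2 - \|x(t)\|^2$, this sets up a telescoping structure on the right.

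I would first establish the contractive bound on the dense subspace of finitely supported inputs. Given such an input $u$ with support contained in $[a, b]$, solve the state recursion $x(t) = A(t) x(t+1) + B(t) u(t)$ backwards in time from the initial condition $x(t) = 0$ for all $t > b$; this produces a well-defined state sequence with $x(n+1) = 0$ for $n \geq b$. Summing the rearranged energy balance over $m \leq t \leq n$ with $n \geq b$ yields
\begin{equation*}
\sum_{t=m}^n \|y(t)\|^2 \leq \sum_{t=m}^n \|u(t)\|^2 - \|x(m)\|^2 \leq \sum_{t \in \Z} \|u(t)\|^2.
\end{equation*}
Letting $m \to -\infty$ and $n \to \infty$ shows that $(y(t)) \in \bigoplus_{t \in \Z} \scrY(t)$ with $\|y\|^2 \leq \|u\|^2$. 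Since finitely supported sequences are dense in the Hilbert direct sum $\bigoplus_{t \in \Z} \scrU(t)$, this bound extends uniquely to a contraction on the entire input space, which must coincide with the transfer map by the uniqueness built into its definition.

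The main obstacle is the behavior of the state at $t = \pm \infty$. The recursion only runs backwards in time, so the only natural way to define $x$ from $u$ is to impose a zero initial condition at $+\infty$; this is what forces the telescoping boundary term $\|x(n+1)\|^2$ to vanish for $n$ large, while the remaining boundary contribution $\|x(m)\|^2 \geq 0$ is simply discarded to yield the upper bound. Getting these boundary terms under control is exactly why the argument has to be staged: first on finitely supported inputs, where the state is unambiguously constructible, and then extended by density. Once the contractive estimate is in hand on that dense subspace, the passage to arbitrary inputs is a routine continuity and uniqueness argument.
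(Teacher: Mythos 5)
Your proof is correct and follows essentially the same route as the paper's: both derive the pointwise energy balance $\|x(t)\|^2 + \|y(t)\|^2 \leq \|x(t+1)\|^2 + \|u(t)\|^2$ from contractivity of each junction, impose a zero state at a finite future time, telescope backwards, and discard the nonnegative boundary term $\|x(m)\|^2$. Your version merely makes explicit the restriction to finitely supported inputs and the extension by density, which the paper leaves implicit.
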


\begin{proof}
Fix $t_0 \in \Z$.  Suppose $x(t_0)=0$, and let $\{y(t)\}_{t<t_0}$ be the output generated from the input $\{u(t)\}_{t<t_0}$  by the contractive time varying linear system 
\begin{eqnarray*}
\begin{bmatrix} x(t) \\ y(t) \end{bmatrix} &=& \begin{bmatrix} A(t) & B(t) \\ C(t) & D(t) \end{bmatrix} \begin{bmatrix} x(t+1) \\ u(t) \end{bmatrix}.
\end{eqnarray*}
Since $\left\| \begin{bmatrix} A(t) & B(t) \\ C(t) & D(t) \end{bmatrix} \right\| \leq 1,$ we have
\begin{eqnarray*}
\|x(t)\|^2 + \|y(t)\|^2 \leq \|x(t+1)\|^2 + \|u(t)\|^2.
\end{eqnarray*}
By induction, 
\begin{eqnarray*}
\|x(t)\|^2 \leq \sum_{k=t}^{t_0-1} \|u(k)\|^2 - \sum_{k=t}^{t_0-1} \|y(k)\|^2, \quad t<t_0.
\end{eqnarray*}
In particular, 
\begin{eqnarray*}
\sum_{k=t}^{t_0-1} \|y(k)\|^2 \leq \sum_{k=t}^{t_0-1} \|u(k)\|^2.
\end{eqnarray*}
Since this holds for arbitrary $t_0 \in \Z$ and arbitrary $t<t_0$, it follows that the transfer map $T$ is a contraction. 
\end{proof}

Proposition \ref{thm: Omegalemma} will imply that if the solution $A$ to the displacement equation \eqref{eq: diseq} is positive semidefinite, then the entries of the operator $T$ from Lemma \ref{thm: biglemma} satisfy the necessary intertwining relations in order for $T$ to belong to $\scrS(E, H, \sigma)$.  First note that if $A \geq 0$, then there exists $L \in \sigma^{(N)}(M)'$ such that 
$A = LL^*$.  
When we rewrite the displacement equation in terms of $L$, we get
\begin{eqnarray*} 
LL^* - \z^*(I_E \otimes LL^*)\z = UU^* - VV^*.
\end{eqnarray*} 
Define $\hat{A} := \begin{bmatrix} L^* \\ V^* \end{bmatrix}$ and $\hat{B} := 
\begin{bmatrix} (I_E \otimes L^*)\z \\ U^* \end{bmatrix}$.
\begin{proposition} \label{thm: Omegalemma}
If the solution to the displacement equation \eqref{eq: diseq} is positive semidefinite, then there exists a unique partial isometry $\Omega: (E \otimes H^{(N)}) \oplus H \to H^{(N+1)}$ such that $\hat{A} = \Omega \hat{B}$ and $ker(\Omega)^\perp \subseteq \overline{Range(\hat{B})}$.  Moreover, for all $a \in M$, 
\begin{equation} \label{eq: relation}
\sigma^{(N+1)}(a)\Omega =  \Omega \begin{bmatrix} (\sigma^{(N)})^E \circ \varphi(a) & 0 \\ 0 & \sigma(a) \end{bmatrix}.
\end{equation}
\end{proposition}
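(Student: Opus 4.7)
The plan is to translate the displacement equation, once written in terms of $L$, into a norm identity between $\hat{A}$ and $\hat{B}$, build $\Omega$ as the partial isometry implementing that identity, and then track the $M$-action through the construction to obtain \eqref{eq: relation}.

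First I would rearrange $LL^* - \z^*(I_E \otimes LL^*)\z = UU^* - VV^*$ as $LL^* + VV^* = \z^*(I_E \otimes LL^*)\z + UU^*$ and recognize the two sides as $\hat{A}^*\hat{A}$ and $\hat{B}^*\hat{B}$ respectively.  Consequently $\|\hat{A}x\| = \|\hat{B}x\|$ for every $x \in H^{(N)}$, so the prescription $\hat{B}x \mapsto \hat{A}x$ extends to a well-defined isometry from $\overline{Range(\hat{B})}$ onto $\overline{Range(\hat{A})}$; declaring this isometry to act as zero on $\overline{Range(\hat{B})}^\perp$ produces a partial isometry $\Omega : (E \otimes H^{(N)}) \oplus H \to H^{(N+1)}$ satisfying $\Omega\hat{B} = \hat{A}$ and $\ker(\Omega)^\perp = \overline{Range(\hat{B})}$.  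For uniqueness, any partial isometry $\Omega'$ with the stated properties must agree with $\Omega$ on $Range(\hat{B})$, hence on its closure by boundedness, while the constraint $\ker(\Omega')^\perp \subseteq \overline{Range(\hat{B})}$ forces $\Omega'$ to vanish on the orthogonal complement.

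For the intertwining relation \eqref{eq: relation}, I would first verify two auxiliary identities.  The identity $\sigma^{(N+1)}(a)\hat{A} = \hat{A}\sigma^{(N)}(a)$ follows from $L \in \sigma^{(N)}(M)'$ together with $\Lambda_i \in \sigma(M)'$ and the diagonal form of $\sigma^{(N)}$.  Denoting by $\tau(a)$ the block-diagonal operator on the right of \eqref{eq: relation}, the identity $\tau(a)\hat{B} = \hat{B}\sigma^{(N)}(a)$ uses the intertwining property of $\z$, the fact that $I_E \otimes L^*$ commutes with $(\sigma^{(N)})^E \circ \varphi(a) = \varphi(a) \otimes I_{H^{(N)}}$ (again because $L \in \sigma^{(N)}(M)'$), and $U^*\sigma^{(N)}(a) = \sigma(a)U^*$ from the diagonal form of $\sigma^{(N)}$.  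Combining both identities with $\Omega\hat{B} = \hat{A}$ yields $\sigma^{(N+1)}(a)\Omega \hat{B}x = \Omega\tau(a)\hat{B}x$ for every $x \in H^{(N)}$, and hence $\sigma^{(N+1)}(a)\Omega = \Omega\tau(a)$ on $\ker(\Omega)^\perp = \overline{Range(\hat{B})}$ by continuity.

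The main obstacle is extending this intertwining from $\ker(\Omega)^\perp$ to its orthogonal complement $\ker(\Omega)$.  On $\ker(\Omega)$ the left side $\sigma^{(N+1)}(a)\Omega$ vanishes automatically, so it suffices to show that $\Omega\tau(a) = 0$ there, i.e., that $\ker(\Omega)$ is $\tau(a)$-invariant for every $a \in M$.  The identity $\tau(a)\hat{B} = \hat{B}\sigma^{(N)}(a)$ already shows that $\overline{Range(\hat{B})} = \ker(\Omega)^\perp$ is $\tau(a)$-invariant.  Since $\varphi$ and $\sigma$ are $*$-homomorphisms, $\tau$ is a $*$-representation of $M$, so applying the invariance to $a^*$ gives invariance of $\overline{Range(\hat{B})}$ under $\tau(a)^* = \tau(a^*)$ as well; consequently $\overline{Range(\hat{B})}$ reduces $\tau(M)$, its orthogonal complement $\ker(\Omega)$ is $\tau$-invariant, and \eqref{eq: relation} holds on the entire domain.
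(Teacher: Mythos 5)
Your proof is correct, but it takes a different route from the paper's on both halves of the statement. For existence and uniqueness of $\Omega$, the paper simply rewrites the displacement equation as $\hat{A}^*\hat{A} = \hat{B}^*\hat{B}$ and invokes Douglas's lemma as a black box, whereas you inline the proof of that lemma by constructing the isometry $\hat{B}x \mapsto \hat{A}x$ on $\overline{Range(\hat{B})}$ and extending by zero; the content is the same, yours is just self-contained. The real divergence is in the intertwining relation \eqref{eq: relation}. The paper reduces to unitary $u \in M$ (using that a $W^*$-algebra is generated by its unitaries), conjugates $\Omega$ by the relevant unitaries to form a candidate $\hat{\Omega}$, and shows $\hat{\Omega} = \Omega$ by verifying that $\hat{\Omega}$ satisfies the two defining properties and appealing to the uniqueness clause of Douglas's lemma. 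You instead prove the operator identity directly for arbitrary $a \in M$ by splitting the domain: both sides agree on $\overline{Range(\hat{B})} = \ker(\Omega)^\perp$ by pushing $\sigma^{(N)}(a)$ through $\hat{B}$ and $\hat{A}$, and both sides vanish on $\ker(\Omega)$ because $\overline{Range(\hat{B})}$ is invariant under the $*$-representation $\tau$ and hence reducing, so its complement is $\tau(a)$-invariant. Note that both arguments ultimately rest on the same two identities \eqref{eq: relations1} and on the same reducing-subspace observation (the paper phrases it as ``$Q$ commutes with the block-diagonal operators''). Your version buys a more elementary and direct argument that avoids the detour through unitaries; the paper's version buys brevity by recycling the uniqueness statement it already has in hand. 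One cosmetic point: your construction yields $\ker(\Omega)^\perp = \overline{Range(\hat{B})}$ with equality, which of course satisfies the stated containment, and your uniqueness argument correctly uses the containment hypothesis to force any competitor to vanish on $\overline{Range(\hat{B})}^\perp$.
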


\begin{proof}
If the solution $A$ to the displacement equation is positive semidefinite, then we can rewrite the displacement equation as follows:
\begin{equation*}
\hat{A}^* \hat{A} = \hat{B}^* \hat{B},
\end{equation*}
where $\hat{A}$ and $\hat{B}$ are defined above.
By Douglas's Lemma  \cite[Theorem 1]{Douglas1966a}, there exists a unique partial isometry $\Omega: (E \otimes H^{(N)}) \oplus H \to H^{(N+1)}$ such that $\hat{A} = \Omega \hat{B}$ and $ker(\Omega)^\perp \subseteq \overline{Range(\hat{B})}$. Lastly, we must show that equation \eqref{eq: relation} holds. Recall that since $M$ is a $W^*$-algebra, it is generated by its unitaries.  Thus it suffices to prove equation \eqref{eq: relation} for all unitary elements of $M$.  Let $u \in M$ be unitary, and define the partial isometry $\hat{\Omega} = \sigma^{(N+1)}(u^*) \Omega \begin{bmatrix} (\sigma^{(N)})^E \circ \varphi(u) & 0 \\ 0 & \sigma(u) \end{bmatrix}$.  We will show $\hat{\Omega} = \Omega$.   

Note that the intertwining relations satisfied by the entries of $\hat{A}$ and $\hat{B}$ imply
\begin{equation} \label{eq: relations1}
\sigma^{(N+1)}(a)\hat{A} = \hat{A} \sigma^{(N)}(a) \text{ and }
\begin{bmatrix} (\sigma^{(N)})^E \circ \varphi(a) & 0 \\ 0 & \sigma(a) \end{bmatrix} \hat{B} = \hat{B} \sigma^{(N)}(a)
\end{equation}
for all $a \in M$.  Then $\hat{A} = \hat{\Omega} \hat{B}$ since $\hat{A} = \Omega \hat{B}$ and equation \eqref{eq: relations1} holds.  By the uniqueness of $\Omega$, it remains to show that $ker(\hat{\Omega})^\perp \subseteq \overline{Range(\hat{B})}$. That is, we must show $P \leq Q$, where $P$ is projection onto $ker(\hat{\Omega})^\perp$ and $Q$ is projection onto $\overline{Range(\hat{B})}$.  

Observe that $Q$ commutes with $\begin{bmatrix} (\sigma^{(N)})^E \circ \varphi(a) & 0 \\ 0 & \sigma(a) \end{bmatrix}$ for all $a \in M$ since \eqref{eq: relations1} holds.  Thus \begin{multline*}
P = \hat{\Omega}^* \hat{\Omega} = \begin{bmatrix} (\sigma^{(N)})^E \circ \varphi(u^*) & 0 \\ 0 & \sigma(u^*) \end{bmatrix} \Omega^* \Omega \begin{bmatrix} (\sigma^{(N)})^E \circ \varphi(u) & 0 \\ 0 & \sigma(u) \end{bmatrix} \\
\leq \begin{bmatrix} (\sigma^{(N)})^E \circ \varphi(u^*) & 0 \\ 0 & \sigma(u^*) \end{bmatrix} Q \begin{bmatrix} (\sigma^{(N)})^E \circ \varphi(u) & 0 \\ 0 & \sigma(u) \end{bmatrix} = Q,
\end{multline*}
where the inequality follows from the fact that $ker(\Omega)^\perp \subseteq \overline{Range(\hat{B})}$.

\end{proof}

\begin{proof}[Proof of Lemma \ref{thm: biglemma}]
Suppose the solution $A$ to the displacement equation \eqref{eq: diseq} is positive semidefinite, and let $\Omega$ be as in Proposition \ref{thm: Omegalemma}.  Then we may write $\Omega= \begin{bmatrix} X & Z \\ Y & W \end{bmatrix},$ for some $X \in B(E \otimes H^{(N)}, H^{(N)}), Z \in B(H, H^{(N)}), Y \in B(E \otimes H^{(N)}, H),$ and $W \in B(H)$.  The following intertwining relations are a consequence of equation \eqref{eq: relation}:
\begin{equation} \label{eq: interrelations}
X \in \mathfrak{I}((\sigma^{(N)})^E \circ \varphi,\sigma^{(N)}), \quad Z \in \mathfrak{I}(\sigma, \sigma^{(N)}), \quad Y \in \mathfrak{I}((\sigma^{(N)})^E \circ \varphi, \sigma), \quad W \in \sigma(M)'.
\end{equation}

Writing $\hat{A} = \Omega \hat{B}$ in terms of the entries of $\hat{A}, \Omega,$ and $\hat{B}$, we get the system of equations
\begin{equation} \label{eq: syseq}
\begin{aligned}
L^* &= X(I_E \otimes L^*)\z + ZU^* \\
V^* &= Y(I_E \otimes L^*)\z + WU^*.
\end{aligned}
\end{equation}
After substituting the first equation into the second $K$ times, we get
\begin{multline} \label{eq: subeq}
V^* = WU^* + \sum_{k=0}^{K-1} Y(I_E \otimes ((X^*)^{(k)})^*)(I_{E^{\otimes k+1}} \otimes ZU^*)\z^{(k+1)} \\
+Y(I_E \otimes ((X^*)^{(K)})^*)(I_{E^{\otimes K+1}} \otimes L^*)\z^{(K+1)}.
\end{multline} 
We can bound the last term in equation \eqref{eq: subeq} by
\begin{multline*}
\| Y(I_E \otimes ((X^*)^{(K)})^*)(I_{E^{\otimes K+1}} \otimes L^*)\z^{(K+1)}\|
\leq \|Y\| \|((X^*)^{(K)})^*\| \|L^*\| \|\z^{(K+1)}\| \\
\leq \|Y\| \|X\|^K \|L^*\| \|\z\|^{K+1}.
\end{multline*}
Since $\| \z\| < 1$ and $\|X\| \leq 1$, the last term goes to $0$ as $K$ goes to infinity, which shows
\begin{eqnarray*}
V^* = WU^* + \sum_{k=0}^{\infty} Y(I_E \otimes ((X^*)^{(k)})^*)(I_{E^{\otimes k+1}} \otimes ZU^*)\z^{(k+1)}.
\end{eqnarray*}
Form the infinite upper triangular matrix $T =[T_{ij}]_{i,j=0}^\infty$ defined as follows:
\begin{displaymath}
T_{ij} = \left\{ 
\begin{array} {lr}
0 & j<i \\
I_{E^{\otimes i}} \otimes W & j=i \\
I_{E^{\otimes i}} \otimes Y(I_E \otimes Z) & j=i+1 \\
I_{E^{\otimes i}} \otimes Y(I_E \otimes ((X^*)^{(j-i-1)})^*)(I_{E^{\otimes j-i}} \otimes Z) & j > i+1
\end{array}
\right.
\end{displaymath}
That is, 
\begin{equation*}
T = \begin{bmatrix}
W & Y(I_E \otimes Z) & Y(I_E \otimes X)(I_{E^{\otimes 2}} \otimes Z) & Y(I_E \otimes ((X^*)^{(2)})^*)(I_{E^{\otimes 3}} \otimes Z) & \cdots \\
0 & I_E \otimes W & I_E \otimes Y(I_E \otimes Z) & I_E \otimes Y(I_E \otimes X)(I_{E^{\otimes 2}} \otimes Z) & \cdots \\
0 & 0 & I_{E^{\otimes 2}} \otimes W & I_{E^{\otimes 2}} \otimes Y(I_E \otimes Z) & \cdots \\
\vdots & \vdots & \vdots & \vdots & \ddots 
\end{bmatrix}.
\end{equation*}
Note that $TU_\infty = V_\infty$.  We want to show that $T$ extends to an element of $\scrS(E,H, \sigma)$. It is easy to check that $T_{0j} \in \mathfrak{I}(\sigma^{E^{\otimes j}} \circ \varphi_j, \sigma), j \geq 0,$ because of the intertwining relations \eqref{eq: interrelations} satisfied by $X, Z, Y,$ and $W$.  To show that $\|T\| \leq 1$, we show that $T$ is the transfer map of a contractive time varying linear system.  

From the system of equations \eqref{eq: syseq} we have that, for all $t \in \N$ and for all $h \in H^{(N)}$,
\begin{equation} \label{eq: syseq2}
\begin{aligned}
(I_{E^{\otimes t}} \otimes L^*)\z^{(t)}h &= (I_{E^{\otimes t}} \otimes X)(I_{E^{\otimes t+1}} \otimes L^*)\z^{(t+1)}h + (I_{E^{\otimes t}} \otimes Z)(I_{E^{\otimes t}} \otimes U^*)\z^{(t)}h \\
(I_{E^{\otimes t}} \otimes V^*)\z^{(t)}h &= (I_{E^{\otimes t}} \otimes Y)(I_{E^{\otimes t+1}} \otimes L^*)\z^{(t+1)}h + (I_{E^{\otimes t}} \otimes W)(I_{E^{\otimes t}} \otimes U^*)\z^{(t)}h.
\end{aligned}
\end{equation}
Fix $h \in H^{(N)}$.  For $t \in \N$, define $x(t) = (I_{E^{\otimes t}} \otimes L^*)\z^{(t)}h, u(t) = (I_{E^{\otimes t}} \otimes U^*)\z^{(t)}h,$ and $y(t) = (I_{E^{\otimes t}} \otimes V^*)\z^{(t)}h.$ Also define $A(t) =  I_{E^{\otimes t}} \otimes X, B(t) = I_{E^{\otimes t}} \otimes Z, C(t) = I_{E^{\otimes t}} \otimes Y,$ and $D(t) = I_{E^{\otimes t}} \otimes W$.  Then 
\begin{eqnarray*}
U_\infty h = \begin{bmatrix} u(0) \\ u(1) \\ u(2) \\ \vdots \end{bmatrix},
V_\infty h = \begin{bmatrix} y(0) \\ y(1) \\ y(2) \\ \vdots \end{bmatrix},
\end{eqnarray*}
and the system \eqref{eq: syseq2} may be rewritten as 
\begin{eqnarray*}
x(t) &=& A(t)x(t+1) + B(t)u(t) \\
y(t) &=& C(t)x(t+1) + D(t)u(t), \quad t \in \N,
\end{eqnarray*}
where $\scrU(t) = E^{\otimes t} \otimes H, \scrY(t) = E^{\otimes t} \otimes H$, and $\scrH(t) = E^{\otimes t} \otimes H^{(N)}$.
Since $TU_\infty h = V_\infty h,$ $T$ is the transfer map of the system.  The matrices
\begin{eqnarray*}
\begin{bmatrix}
A(t) & B(t) \\
C(t) & D(t)
\end{bmatrix} = \begin{bmatrix}
I_{E^{\otimes t}} \otimes X & I_{E^{\otimes t}} \otimes Z \\
I_{E^{\otimes t}} \otimes Y & I_{E^{\otimes t}} \otimes W
\end{bmatrix} = I_{E^{\otimes t}} \otimes \begin{bmatrix} X & Z \\ Y & W \end{bmatrix}
\end{eqnarray*}
have norm equal to $1$ for all $t$, since $\Omega = \begin{bmatrix} X & Z \\ Y & W \end{bmatrix}$ is of norm $1$.  Thus $T$ is the transfer map of a contractive system.  Proposition \ref{thm: transmap} implies that $T \in \scrS(E,H,\sigma).$

Conversely, if there exists $T \in \scrS(E,H,\sigma)$ such that $TU_\infty = V_\infty$, then the solution to the displacement equation may be written as follows:
\begin{equation*}
A=U_\infty^*U_\infty - V_\infty^*V_\infty = U_\infty^* U_\infty - U_\infty^*T^*TU_\infty = U_\infty^*(I-T^*T)U_\infty \geq 0, 
\end{equation*}
since $\|T\| \leq 1$.
\end{proof}

Finally, we prove our generalized Nevanlinna-Pick Theorem.

\begin{proof}[Proof of Theorem \ref{thm: NPtheorem}]
We have already noted in Remark \ref{rk: pickmatrix} that the Pick matrix $A$ in equation \eqref{eq: pickmat} is the unique solution to the displacement equation, and we may write $A = U_\infty^*U_\infty - V_\infty^*V_\infty$.

If $A \geq 0$, then by Lemma \ref{thm: biglemma}, there exists $T \in \scrS(E,H,\sigma)$ such that $TU_\infty = V_\infty.$  By Remark \ref{rk: pickmatrix}, we rewrite $U_\infty$ and $V_\infty$ in terms of the Cauchy kernels to get
\begin{eqnarray*}
T \begin{bmatrix} C(\z_1) & \cdots & C(\z_N) \end{bmatrix} = \begin{bmatrix} (I_{\scrF(E)} \otimes \Lambda_1) C(\z_1) & \cdots & (I_{\scrF(E)} \otimes \Lambda_N) C(\z_N) \end{bmatrix}
\end{eqnarray*}
Comparing the matrices entrywise, we see that 
\begin{equation} \label{eq: entrywise}
TC(\z_i) = (I_{\scrF(E)} \otimes \Lambda_i) C(\z_i), \quad i = 1, \ldots, N.
\end{equation}
By Lemma \ref{thm: pteval}, we can rewrite the left hand side of equation \eqref{eq: entrywise} to get 
\begin{eqnarray*}
(I_{\scrF(E)} \otimes T(\z_i)) C(\z_i) = (I_{\scrF(E)} \otimes \Lambda_i) C(\z_i).
\end{eqnarray*}
It follows that $T(\z_i) = \Lambda_i$ for all $i = 1, \ldots, N$. Together with Lemma \ref{thm: equalsets}, this implies that there exists $X \in H^\infty(E^\sigma)$ with $\|X\| \leq 1$ such that $\hat{X}(\z_i) = \Lambda_i$ for all $i =1, \ldots, N$.

Conversely, suppose there exists $X \in H^\infty(E^\sigma)$ with $\|X\| \leq 1$ such that $\hat{X}(\z_i) = \Lambda_i$ for all $i =1, \ldots, N$.  Then by Lemma \ref{thm: equalsets}, there exists $T \in \scrS(E,H,\sigma)$ such that $T(\z_i) = \Lambda_i$ for all $i=1, \ldots, N$.  By the above calculations, $TU_\infty = V_\infty$, and by Lemma \ref{thm: biglemma}, $A \geq 0$. 
\end{proof}

\section{Remarks on Point Evaluations} 
\label{s: remarksonpteval}
As we noted after equation \eqref{eq: pteval}, the point evaluation defined by it is not multiplicative.  Nevertheless, thanks to \cite[Theorem 19]{Muhly2009}, point evaluations may be viewed as giving rise to antihomomorphisms from $H^\infty(E^\sigma)$ into the completely bounded maps on $\sigma(M)'$.  The proof of this assertion is simply a matter of shifting emphasis.  We let $E^\sigma$ play the role of $E$ in \cite[Theorem 19]{Muhly2009} and use Lemma \ref{thm: pteval}. Here are the details.

First recall that for a pair of points $\z$ and $\w$ in $E^\sigma$ with norm less than $1$, the inner product $\langle C(\z), C(\w) \rangle$ is given by the formula $\langle C(\z), C(\w) \rangle = C(\z)^*C(\w)$. Consequently, the map $a \mapsto \langle C(\z), \rho(\varphi_\infty^\sigma(a))C(\w) \rangle = \langle C(\z), (I_{\scrF(E)} \otimes a)C(\w) \rangle$ is a completely bounded map from $\sigma(M)'$ into itself.  So if $\z \in E^\sigma$ with $\| \z\|<1$ and $X \in H^\infty(E^\sigma)$, we may define the map $\Phi_X^\z$ on $\sigma(M)'$ by the formula 
\begin{equation} \label{eq: antihom}
\Phi_X^\z(a) :=\langle C(\z),\rho(\varphi_\infty^\sigma(a)) \rho(X) C(0) \rangle, \quad a \in \sigma(M)'.
\end{equation}
Note that $C(0) = \begin{bmatrix} I_H & 0 & 0 & \cdots \end{bmatrix}^T$, so our definition is precisely that in \cite[Theorem 19]{Muhly2009} with $E^\sigma$ replacing $E$, $\rho(\varphi_\infty^\sigma(a))$ replacing $\varphi_\infty(a)$, $\rho(X)$ replacing $X$, and $\z$ replacing $\xi$ in the notation of that theorem. 

Now note that formula (2) of \cite[Theorem 19]{Muhly2009} translated to our context allows us to write
\begin{equation*}
\rho(X)^*\rho(\varphi_\infty^\sigma(a^*))C(\z) = \rho(\varphi_\infty^\sigma(\Phi_X^\z(a)^*))C(\z).
\end{equation*}
Consequently, for $X, Z \in H^\infty(E^\sigma), a \in \sigma(M)',$ and $\z \in E^\sigma$ with $\| \z \|<1$,
\begin{multline*}
\Phi_{XZ}^\z(a) = \langle C(\z), \rho(\varphi_\infty^\sigma(a)) \rho(XZ)C(0) \rangle
=\langle \rho(X)^* \rho(\varphi_\infty^\sigma(a^*)) C(\z), \rho(Z) C(0) \rangle \\
= \langle \rho(\varphi_\infty^\sigma(\Phi_X^\z(a)^*))C(\z), \rho(Z)C(0) \rangle 
= \langle C(\z), \rho(\varphi_\infty^\sigma(\Phi_X^\z(a))) \rho(Z) C(0) \rangle
= \Phi_Z^\z(\Phi_X^\z(a)),
\end{multline*}
which shows that $\Phi_{XZ}^\z = \Phi_Z^\z \circ \Phi_X^\z$.  Thus we arrive at the following theorem.
\begin{theorem}
Let $\z \in E^\sigma$ with $\| \z \| <1$ and $X \in H^\infty(E^\sigma)$.  Define $\Phi_X^\z : \sigma(M)' \to \sigma(M)'$ by formula \eqref{eq: antihom}. Then the map $X \mapsto \Phi_X^\z$ is an algebra antihomomorphism from $H^\infty(E^\sigma)$ into the completely bounded maps on $\sigma(M)'$.  
\end{theorem}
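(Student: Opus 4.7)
The plan is to verify three things: that each $\Phi_X^\z$ is completely bounded on $\sigma(M)'$, that the assignment $X \mapsto \Phi_X^\z$ is linear, and that it reverses products. Complete boundedness is immediate from \eqref{eq: antihom}: the map $a \mapsto \rho(\varphi_\infty^\sigma(a)) = I_{\scrF(E)} \otimes a$ is a normal $*$-homomorphism of $\sigma(M)'$ into $B(\scrF(E) \otimes_\sigma H)$, and sandwiching it between $\rho(X)C(0)$ on the right and $C(\z)^*$ on the left preserves complete boundedness while keeping values in $\sigma(M)'$ (using the intertwining property of the Cauchy kernel). Linearity of $X \mapsto \Phi_X^\z$ follows from linearity of $\rho$ and of the inner product.

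For the antihomomorphism relation, the central ingredient is the identity
\begin{equation*}
\rho(X)^* \rho(\varphi_\infty^\sigma(a^*)) C(\z) = \rho(\varphi_\infty^\sigma(\Phi_X^\z(a)^*)) C(\z),
\end{equation*}
which is formula (2) of \cite[Theorem 19]{Muhly2009} transferred to our setting by letting $E^\sigma$ play the role of $E$. Granting this, for $X, Z \in H^\infty(E^\sigma)$ and $a \in \sigma(M)'$ I would expand
\begin{equation*}
\Phi_{XZ}^\z(a) = \langle C(\z), \rho(\varphi_\infty^\sigma(a)) \rho(X)\rho(Z) C(0) \rangle = \langle \rho(X)^* \rho(\varphi_\infty^\sigma(a^*)) C(\z), \rho(Z) C(0) \rangle,
\end{equation*}
substitute the identity on the left factor, and then move adjoints back across the inner product to recognize $\langle C(\z), \rho(\varphi_\infty^\sigma(\Phi_X^\z(a))) \rho(Z) C(0) \rangle = \Phi_Z^\z(\Phi_X^\z(a))$. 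This is exactly the antihomomorphism relation $\Phi_{XZ}^\z = \Phi_Z^\z \circ \Phi_X^\z$.

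The main obstacle is really just justifying the cited identity in the $\sigma$-dual setting. In practice this reduces to an application of Lemma \ref{thm: pteval} to the operator $T := \rho(X)^* \rho(\varphi_\infty^\sigma(a^*))$, which lies in $\scrU_\scrT(E, H, \sigma)$ by Lemma \ref{thm: equalsets}, being the adjoint of $\rho(\varphi_\infty^\sigma(a) X) \in \rho(H^\infty(E^\sigma))$. A short computation identifies the point evaluation $T(\z) = \langle C(0), T C(\z) \rangle$ with $\Phi_X^\z(a)^*$, after which Lemma \ref{thm: pteval} delivers the displayed identity and the remainder of the argument proceeds as outlined. No further work beyond these translations is required.
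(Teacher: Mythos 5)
Your proposal is correct and follows essentially the same route as the paper: the paper likewise reduces everything to the identity $\rho(X)^*\rho(\varphi_\infty^\sigma(a^*))C(\z) = \rho(\varphi_\infty^\sigma(\Phi_X^\z(a)^*))C(\z)$ (formula (2) of \cite[Theorem 19]{Muhly2009} transported to the dual correspondence via Lemma \ref{thm: pteval}) and then performs the same adjoint-shuffling computation to get $\Phi_{XZ}^\z = \Phi_Z^\z \circ \Phi_X^\z$. Your explicit derivation of that identity from Lemmas \ref{thm: equalsets} and \ref{thm: pteval}, identifying $T(\z)$ with $\Phi_X^\z(a)^*$ for $T = \rho(X)^*\rho(\varphi_\infty^\sigma(a^*)) \in \scrU_\scrT(E,H,\sigma)$, is precisely the ``shift of emphasis'' the paper alludes to.
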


We conclude with a theorem that relates our generalized Nevanlinna-Pick theorem, Theorem  \ref{thm: NPtheorem}, to Muhly and Solel's, Theorem \ref{thm: MSNPtheorem}, and gives a new characterization for interpolation in terms of completely bounded maps.  First, we define the center of a $W^*$-correspondence as in \cite[Definition 4.11]{Muhly2008b}.  

\begin{definition} If $E$ is a $W^*$-correspondence over a $W^*$-algebra M, then the \emph{center} of $E$, denoted $\mathfrak{Z}(E)$, is the collection of points $\xi \in E$ such that $a \cdot \xi = \xi \cdot a$ for all $a \in M$. 
\end{definition}

In \cite[Lemma 4.12]{Muhly2008b}, Muhly and Solel proved that if $E$ is a $W^*$-correspondence over a $W^*$-algebra $M$, then $\frakZ(E)$ is a $W^*$-correspondence over the commutative $W^*$-algebra $\frakZ(M)$. In general, we will say that a $W^*$-correspondence $E$ over a commutative $W^*$-algebra $M$ is \emph{central} if $E$ equals its center. Note that in Theorem \ref{thm: NPtheorem} (respectively, Theorem \ref{thm: MSNPtheorem}), the Pick matrix, and thus its positivity (resp., complete positivity), does not change if the correspondence $(\sigma(M)', E^\sigma)$ (resp., $(M, E)$) is replaced by the correspondence $(\frakZ(\sigma(M)'), \frakZ(E^\sigma))$ (resp., $(\frakZ(M), \frakZ(E))$).  Thus there exists an interpolating map in $H^\infty(E^\sigma)$ (resp., $H^\infty(E)$) if and only if there exists an interpolating map in $H^\infty(\frakZ(E^\sigma))$ (resp., $H^\infty(\frakZ(E))$).  Consequently, for the final theorem we restrict our attention to the correspondences $(\frakZ(\sigma(M)'), \frakZ(E^\sigma))$ and $(\frakZ(M), \frakZ(E))$.  We choose to do this because the centers are isomorphic as correspondences in the following sense.  

\begin{definition}[{\cite[Definition 2.2]{Muhly2008b}}]
An \emph{isomorphism} of a $W^*$-correspondence $E_1$ over $M_1$ and a $W^*$-correspondence $E_2$ over $M_2$ is a pair $(\sigma, \Psi)$ where $\sigma: M_1 \to M_2$ is an isomorphism of $W^*$-algebras, $\Psi:E_1 \to E_2$ is a vector space isomorphism, and for $e, f \in E_1$ and $a,b \in M_1$, we have $\Psi(a \cdot e \cdot b) = \sigma(a) \cdot \Psi(e) \cdot \sigma(b)$ and $\langle \Psi(e), \Psi(f) \rangle = \sigma( \langle e,f \rangle )$.
\end{definition}

Define $\gamma: \frakZ(E) \to \frakZ(E^\sigma)$ by $\gamma(\xi) = L_\xi$, where $L_\xi : H \to E \otimes_\sigma H$ is given by $L_\xi(h) = \xi \otimes h$. In \cite[Lemma 4.12]{Muhly2008b}, Muhly and Solel proved that the pair $(\sigma, \gamma)$ is an isomorphism of the correspondences $(\frakZ(M), \frakZ(E))$ and $(\frakZ(\sigma(M)'), \frakZ(E^\sigma))$.

\begin{proposition}
For $k \in \N$, define the map $\gamma_k : \frakZ(E)^{\otimes k} \to \frakZ(E^\sigma)^{\otimes k}$ by $\gamma_k(\xi_1 \otimes \cdots \otimes \xi_k) = L_{\xi_1} \otimes \cdots \otimes L_{\xi_k}$.  The pair $(\sigma, \gamma_k)$ is an isomorphism of $(\frakZ(M), \frakZ(E)^{\otimes k})$ onto $(\frakZ(\sigma(M)'), \frakZ(E^\sigma)^{\otimes k})$.
\end{proposition}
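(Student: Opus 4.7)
The plan is to proceed by induction on $k$, with base case $k = 1$ being precisely Muhly and Solel's \cite[Lemma 4.12]{Muhly2008b}, cited just above the statement. For the inductive step, I would realize $\gamma_k$ as the tensor product $\gamma_{k-1} \otimes \gamma$, with the outer tensor balanced over $\frakZ(M)$ on the domain and over $\frakZ(\sigma(M)')$ on the codomain. The passage from the inductive hypothesis to the conclusion then splits into the familiar checks: well-definedness on the balanced tensor, bijectivity, the bimodule property, and inner product preservation.

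First I would verify well-definedness. For $a \in \frakZ(M)$ and $\xi \in \frakZ(E)$, the base case gives $L_{a \cdot \xi} = \sigma(a) \cdot L_\xi$ and $L_{\xi \cdot a} = L_\xi \cdot \sigma(a)$, and because $\sigma$ restricts to an isomorphism $\frakZ(M) \to \frakZ(\sigma(M)')$, the balancing relations in the domain translate to balancing relations in the codomain. Centrality of each $L_{\xi_i}$ places the image of $\gamma_k$ in $\frakZ(E^\sigma)^{\otimes k}$. Surjectivity follows from surjectivity of $\gamma$ applied factorwise; injectivity from the inductive hypothesis together with injectivity of $\gamma$; and the bimodule identity $\gamma_k(a \cdot \xi \cdot b) = \sigma(a) \cdot \gamma_k(\xi) \cdot \sigma(b)$ reduces, after absorbing $a$ and $b$ into the first and last factors using centrality, to the base case and the inductive hypothesis.

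The most delicate step is inner product preservation. Using the standard recursion
\begin{equation*}
\langle \xi_1 \otimes \cdots \otimes \xi_k,\, \eta_1 \otimes \cdots \otimes \eta_k \rangle = \langle \xi_k,\, \langle \xi_1 \otimes \cdots \otimes \xi_{k-1},\, \eta_1 \otimes \cdots \otimes \eta_{k-1} \rangle \cdot \eta_k \rangle,
\end{equation*}
valid in both $\frakZ(E)^{\otimes k}$ and $\frakZ(E^\sigma)^{\otimes k}$, I would apply the inductive hypothesis to the inner inner product and the base case to the outer one, using compatibility of $\sigma$ with the left actions $\varphi$ and $\varphi^\sigma$ to move the left-multiplied factor through $\gamma$. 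The principal obstacle I anticipate is aligning the balancing relations and the two inner-product recursions under $(\sigma, \gamma_k)$; once that bookkeeping is in place, the remaining verifications reduce routinely to the base case $k=1$ and the inductive hypothesis.
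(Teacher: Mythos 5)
Your proposal is correct and follows essentially the same route as the paper: both reduce the claim to the $k=1$ isomorphism $(\sigma,\gamma)$ by unwinding the recursive formula for the tensor-product inner product (the paper writes this unwinding out directly rather than as a formal induction, peeling off the first factor and using centrality to move the resulting element of $\frakZ(\sigma(M)')$ outside the inner product, where you peel off the last factor and keep it inside via the bimodule compatibility of $\gamma$), and both verify the bimodule identity by absorbing $a$ and $b$ into the tensor factors. Your additional checks of well-definedness and surjectivity are routine and consistent with what the paper leaves implicit.
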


\begin{proof}
Let $\xi_1 \otimes \cdots \otimes \xi_k, \eta_1 \otimes \cdots \otimes \eta_k \in \frakZ(E)^{\otimes k}$.  Since $\xi_i, \eta_i \in \frakZ(E)$, $L_{\xi_j}, L_{\eta_i} \in \frakZ(E^\sigma)$, and $(\sigma, \gamma)$ is an isomorphism of correspondences, we have  
\begin{multline*}
\langle \gamma_k(\xi_1 \otimes \cdots \otimes \xi_k), \gamma_k(\eta_1 \otimes \cdots \otimes \eta_k) \rangle = \langle L_{\xi_1} \otimes \cdots \otimes L_{\xi_k}, L_{\eta_1} \otimes \cdots \otimes L_{\eta_k} \rangle \\
= \langle L_{\xi_2} \otimes \cdots \otimes L_{\xi_k}, \langle L_{\xi_1}, L_{\eta_1} \rangle \cdot L_{\eta_2} \otimes \cdots \otimes L_{\eta_k} \rangle \\
=\langle L_{\xi_2} \otimes \cdots \otimes L_{\xi_k}, L_{\eta_2} \otimes \cdots \otimes L_{\eta_k} \rangle \langle L_{\xi_1}, L_{\eta_1} \rangle 
= \ldots = \langle L_{\xi_k}, L_{\eta_k} \rangle \cdots \langle L_{\xi_1}, L_{\eta_1} \rangle \\
= \sigma(\langle \xi_k, \eta_k \rangle) \cdots \sigma(\langle \xi_1, \eta_1 \rangle)
= \sigma(\langle \xi_1 \otimes \cdots \otimes \xi_k, \eta_1 \otimes \cdots \otimes \eta_k \rangle).
\end{multline*}
Thus $\langle \gamma_k( \xi), \gamma_k(\eta) \rangle = \sigma( \langle \xi, \eta \rangle)$ for all $\xi, \eta \in \frakZ(E)^{\otimes k}$.  Furthermore, since $\sigma$ is an isomorphism, $\| \sigma(\langle \xi, \xi \rangle) \|= \| \langle \xi, \xi \rangle \|$ for all $\xi \in \frakZ(E)^{\otimes k}$.  Thus $\gamma_k$ is an isometry.

For $\xi_1 \otimes \xi_2 \otimes \cdots \otimes \xi_k \in \frakZ(E)^{\otimes k}, a, b \in \frakZ(M)$, and $h \in H$ we have
\begin{multline*}
\gamma_k(a \cdot (\xi_1 \otimes \xi_2 \otimes \cdots \otimes \xi_k) \cdot b)(h) =  \gamma_k((a \cdot \xi_1) \otimes \xi_2 \otimes \cdots \otimes (\xi_k \cdot b)) (h) = L_{a \cdot \xi_1} \otimes L_{\xi_2} \otimes \cdots \otimes L_{\xi_k \cdot b} (h) \\
=  L_{\xi_1}\sigma(a) \otimes L_{\xi_2} \otimes \cdots \otimes L_{\xi_k}\sigma(b) (h)
= L_{\xi_1} \otimes L_{\xi_2} \otimes \cdots \otimes L_{\xi_k}\sigma(a)\sigma(b) (h) \\
 = (I_{E^{\otimes k}} \otimes \sigma(a)) \gamma_k(\xi_1 \otimes \cdots \xi_k) \sigma(b) h.
\end{multline*}
Hence $\gamma_k(a \cdot \xi_1 \otimes \xi_2 \otimes \cdots \otimes \xi_k \cdot b) = \sigma(a) \cdot \gamma_k(\xi_1 \otimes \xi_2 \otimes \cdots \otimes \xi_k ) \cdot \sigma(b).$

\end{proof}

Define $\gamma_\infty : \scrF(\frakZ(E)) \to \scrF(\frakZ(E^\sigma))$ by $\gamma_\infty = diag[\sigma, \gamma, \gamma_2, \ldots]$.  Since $\gamma_k$ is an isomorphism of correspondences for each $k \in \N$, it follows that $\gamma_\infty$ is an isomorphism of correspondences as well. 

\begin{proposition}
For $\xi \in \frakZ(E)$, $\gamma_\infty T_\xi \gamma_\infty^{-1} = T_{\gamma(\xi)}$, where $T_\xi$ is the left creation operator in $H^\infty(\frakZ(E))$ determined by $\xi$, and $T_{\gamma(\xi)}$ is the left creation operator in $H^\infty(\frakZ(E^\sigma))$ determined by $\gamma(\xi)$. For $a \in \frakZ(M)$, $\gamma_\infty \varphi_\infty(a) \gamma_\infty^{-1} = \varphi_\infty^\sigma (\sigma(a))$, where $\varphi_\infty(a)$ is the left action operator in $H^\infty(\frakZ(E))$ determined by $a$, and $\varphi_\infty^\sigma (\sigma(a))$ is the left action operator in $H^\infty(\frakZ(E^\sigma))$ determined by $\sigma(a)$.
\end{proposition}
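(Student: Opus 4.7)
The plan is to reduce both operator equations to their equivalent multiplicative forms $\gamma_\infty T_\xi = T_{\gamma(\xi)}\gamma_\infty$ and $\gamma_\infty \varphi_\infty(a) = \varphi_\infty^\sigma(\sigma(a))\gamma_\infty$, verify each by computing on simple tensors in $\scrF(\frakZ(E))$, and then use boundedness together with the fact, from the preceding proposition, that $\gamma_\infty$ is an isomorphism of $W^*$-correspondences, hence an isometric bijection, to extend to the whole Fock space and dress with $\gamma_\infty^{-1}$.

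For the first identity I would exploit the block structure of $T_\xi$ from equation \eqref{eq: creationop}: $T_\xi$ shifts the grading by prepending $\xi$ to an element of $\frakZ(E)^{\otimes k-1}$, while $\gamma_\infty=\mathrm{diag}[\sigma,\gamma,\gamma_2,\ldots]$ is diagonal in that grading. On a simple tensor $\eta_1\otimes\cdots\otimes\eta_{k-1}\in\frakZ(E)^{\otimes k-1}$ it therefore suffices to check that
\[
\gamma_k(\xi\otimes\eta_1\otimes\cdots\otimes\eta_{k-1})=L_\xi\otimes\gamma_{k-1}(\eta_1\otimes\cdots\otimes\eta_{k-1}),
\]
which is immediate from the definition of $\gamma_k$; the right-hand side is exactly the action of $T_{L_\xi}=T_{\gamma(\xi)}$ on $\gamma_\infty(\eta_1\otimes\cdots\otimes\eta_{k-1})$.

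For the second identity, both $\varphi_\infty(a)$ and $\varphi_\infty^\sigma(\sigma(a))$ are diagonal on their Fock spaces, so I would reduce to the level-$k$ statement $\gamma_k\varphi_k(a)=\varphi_k^\sigma(\sigma(a))\gamma_k$. Evaluated on a simple tensor the left-hand side becomes $\gamma_k((a\cdot\eta_1)\otimes\eta_2\otimes\cdots\otimes\eta_k)$, and the bimodule property $\gamma_k(a\cdot e\cdot b)=\sigma(a)\cdot\gamma_k(e)\cdot\sigma(b)$ from the previous proposition (with $b=1$) turns this into $\sigma(a)\cdot(L_{\eta_1}\otimes\cdots\otimes L_{\eta_k})$, which is precisely $\varphi_k^\sigma(\sigma(a))\gamma_k(\eta_1\otimes\cdots\otimes\eta_k)$. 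The $k=0$ case is the tautology $\sigma(a\cdot x)=\sigma(a)\sigma(x)$ coming from the fact that $\sigma$ is a homomorphism.

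I do not expect any genuine obstacle: the argument is purely formal, driven by the correspondence-isomorphism property of each $(\sigma,\gamma_k)$ together with the diagonal and shift structures of the operators involved. The only care required is bookkeeping—keeping straight which element lives in $\frakZ(M)$ versus $\frakZ(\sigma(M)')$ and in $\frakZ(E)^{\otimes k}$ versus $\frakZ(E^\sigma)^{\otimes k}$—and observing that since the algebraic span of simple tensors is ultraweakly dense and both sides are bounded, the pointwise identities on simple tensors promote to the claimed operator identities on the full Fock space.
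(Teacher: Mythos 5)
Your proposal is correct and follows essentially the same route as the paper: both verify the identities on simple tensors using the factorwise definition of $\gamma_k$, the shift structure of the creation operators, and the diagonal structure of the left actions, then extend by density and boundedness. The only cosmetic difference is that you check $\gamma_\infty T_\xi = T_{\gamma(\xi)}\gamma_\infty$ directly on $\scrF(\frakZ(E))$, whereas the paper applies $\gamma_\infty T_\xi \gamma_\infty^{-1}$ to elements of $\scrF(\frakZ(E^\sigma))$ pulled back through $\gamma_\infty^{-1}$; your explicit density remark is, if anything, slightly more careful.
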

\begin{proof}
Let $\begin{bmatrix} \eta_0 & \eta_1 & \eta_2 & \cdots \end{bmatrix}^T \in \scrF(\frakZ(E^\sigma))$.  Since $\scrF(\frakZ(E^\sigma))$ is isomorphic to $\scrF(\frakZ(E))$ via $\gamma_\infty$, there exist $\alpha_{ij} \in \frakZ(E)$ such that $\eta_i = L_{\alpha_{i1}} \otimes \cdots \otimes L_{\alpha_{ii}},$ for $i \geq 1,$ and $\alpha_0 \in \frakZ(M)$ such that $\sigma(\alpha_0) = \eta_0$. Thus for $\xi \in \frakZ(E)$, we have
\begin{multline*}
\gamma_\infty T_\xi \gamma_\infty^{-1} \begin{bmatrix} \eta_0 \\ \eta_1 \\ \eta_2 \\ \vdots \end{bmatrix} = \gamma_\infty T_\xi \begin{bmatrix} \alpha_0 \\ \alpha_{11} \\ \alpha_{21} \otimes \alpha_{22} \\ \vdots \end{bmatrix}
= \gamma_\infty \begin{bmatrix} 0 \\ \xi \cdot \alpha_0 \\ \xi \otimes \alpha_{11} \\ \xi \otimes \alpha_{21} \otimes \alpha_{22} \\ \vdots \end{bmatrix} \\
= \begin{bmatrix} 0 \\ L_\xi \sigma(\alpha_0)   \\ L_\xi \otimes L_{\alpha_{11}} \\ L_\xi \otimes L_{\alpha_{21}} \otimes L_{\alpha_{22}} \\ \vdots \end{bmatrix} = T_{\gamma(\xi)} \begin{bmatrix} \eta_0 \\ \eta_1 \\ \eta_2 \\ \vdots \end{bmatrix}.
\end{multline*}
For $a \in \frakZ(M)$, we have
\begin{multline*}
\gamma_\infty \varphi_\infty(a) \gamma_\infty^{-1} \begin{bmatrix} \eta_0 \\ \eta_1 \\ \eta_2 \\ \vdots \end{bmatrix} = \gamma_\infty \varphi_\infty(a)  \begin{bmatrix} \alpha_0 \\ \alpha_{11} \\ \alpha_{21} \otimes \alpha_{22} \\ \vdots \end{bmatrix} = \gamma_\infty \begin{bmatrix} a \alpha_0 \\ \varphi(a) (\alpha_{11}) \\  \varphi_2(a) (\alpha_{21} \otimes \alpha_{22}) \\ \vdots \end{bmatrix} \\
 = \begin{bmatrix} \sigma(a \alpha_0) \\ L_{a \cdot \alpha_{11}} \\ L_{a \cdot \alpha_{21}} \otimes L_{\alpha_{22}} \\ \vdots \end{bmatrix} 
= \begin{bmatrix} \sigma(a) \sigma(\alpha_0) \\ \sigma(a) \cdot L_{\alpha_{11}} \\ \sigma(a) \cdot L_{\alpha_{21}} \otimes L_{\alpha_{22}} \\ \vdots \end{bmatrix} = \varphi_\infty^\sigma(\sigma(a)) \begin{bmatrix} \eta_0 \\ \eta_1 \\ \eta_2 \\ \vdots \end{bmatrix}.
\end{multline*}
\end{proof}

Thus we arrive at the following isomorphism from $H^\infty(\frakZ(E))$ onto $H^\infty(\frakZ(E^\sigma))$.

\begin{proposition}
The map defined on the generators of $H^\infty(\frakZ(E))$ by $T_\xi \mapsto \gamma_\infty T_\xi \gamma_\infty^{-1}, \xi \in \frakZ(E),$ and $\varphi_\infty(a) \mapsto \gamma_\infty \varphi_\infty(a) \gamma_\infty^{-1}, a \in \frakZ(M)$, extends to an isomorphism  $\Gamma$ from $H^\infty(\frakZ(E))$ onto $H^\infty(\frakZ(E^\sigma))$.  
\end{proposition}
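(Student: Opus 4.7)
The plan is to define $\Gamma(X) := \gamma_\infty X \gamma_\infty^{-1}$ for $X \in H^\infty(\frakZ(E))$ and to verify step-by-step that it is an isomorphism onto $H^\infty(\frakZ(E^\sigma))$ whose action on the generators is as stated.

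First, I would observe that since $\gamma_\infty$ is an isomorphism of $W^*$-correspondences, it preserves inner products and so is a unitary between the underlying self-dual Hilbert $C^*$-modules $\scrF(\frakZ(E))$ and $\scrF(\frakZ(E^\sigma))$. Spatial conjugation $\Psi(X) := \gamma_\infty X \gamma_\infty^{-1}$ therefore defines a $*$-isomorphism from $\scrL(\scrF(\frakZ(E)))$ onto $\scrL(\scrF(\frakZ(E^\sigma)))$, with inverse $\Psi^{-1}(Y) = \gamma_\infty^{-1} Y \gamma_\infty$. Both $\Psi$ and $\Psi^{-1}$ are ultraweakly continuous, since left and right multiplication by a fixed bounded operator are ultraweakly continuous.

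Second, by the previous proposition, $\Psi(T_\xi) = T_{\gamma(\xi)}$ for $\xi \in \frakZ(E)$ and $\Psi(\varphi_\infty(a)) = \varphi_\infty^\sigma(\sigma(a))$ for $a \in \frakZ(M)$. Since $\gamma$ is a bijection from $\frakZ(E)$ onto $\frakZ(E^\sigma)$ and $\sigma$ is a $W^*$-algebra isomorphism from $\frakZ(M)$ onto $\frakZ(\sigma(M)')$, $\Psi$ carries the generators of $\scrT_+(\frakZ(E))$ bijectively onto those of $\scrT_+(\frakZ(E^\sigma))$. As a norm-continuous algebra homomorphism, $\Psi$ therefore restricts to an algebra isomorphism of $\scrT_+(\frakZ(E))$ onto $\scrT_+(\frakZ(E^\sigma))$.

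Third, I would pass to the ultraweak closures: ultraweak continuity of $\Psi$ gives $\Psi(H^\infty(\frakZ(E))) \subseteq H^\infty(\frakZ(E^\sigma))$, and the symmetric argument applied to $\Psi^{-1}$ yields the reverse inclusion. Hence $\Psi$ restricts to a bijective algebra homomorphism $\Gamma$ from $H^\infty(\frakZ(E))$ onto $H^\infty(\frakZ(E^\sigma))$ with the prescribed action on generators. The main obstacle is the handling of this last ultraweak closure step; everything else is essentially formal once the previous proposition is in hand. Since $H^\infty$ is defined as an ultraweak closure rather than a norm closure of the tensor algebra, it is precisely the ultraweak continuity of spatial conjugation that makes the argument work — a standard but essential fact worth stating explicitly.
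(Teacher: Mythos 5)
Your proposal is correct and matches the paper's (implicit) argument: the paper states this proposition as an immediate consequence of the two preceding propositions, namely that $\gamma_\infty$ is a correspondence isomorphism and that conjugation by it sends $T_\xi$ to $T_{\gamma(\xi)}$ and $\varphi_\infty(a)$ to $\varphi_\infty^\sigma(\sigma(a))$. You have simply filled in the standard details (spatial conjugation is a normal $*$-isomorphism of $\scrL(\scrF(\frakZ(E)))$ onto $\scrL(\scrF(\frakZ(E^\sigma)))$, carries generators onto generators, and hence ultraweak closures onto ultraweak closures), which is exactly what the paper leaves to the reader.
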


With $\Gamma$ in hand, we are finally able to prove that Theorems \ref{thm: NPtheorem} and \ref{thm: MSNPtheorem} are essentially the same when we restrict to the centers of the correspondences.

\begin{theorem} \label{thm: connection}
Let $\z_1, \ldots, \z_N \in \mathfrak{Z}(E^\sigma)$ with $\|\z_i \| <1, i=1, \ldots, N,$ and $\Lambda_1, \ldots, \Lambda_N \in \mathfrak{Z}(\sigma(M)')$.  Define $\Psi_{\Lambda_i}^{\z_i} : \sigma(M)' \to \sigma(M)'$ by $\Psi_{\Lambda_i}^{\z_i}(a) = \langle C(\z_i), (I_{\scrF(E)} \otimes a\Lambda_i^*)C(0) \rangle$. For $X \in H^\infty(\frakZ(E^\sigma))$, define $\Phi_X^{\z_i} : \sigma(M)' \to \sigma(M)'$ by $\Phi_X^{\z_i}(a) =\langle C(\z_i),\rho(\varphi_\infty^\sigma(a)) \rho(X) C(0) \rangle$. Then the following are equivalent:
\begin{enumerate}
\item there exists $Y \in H^\infty(\frakZ(E))$ with $\|Y \| \leq 1$ such that $\hat{Y}(\z_i^*) = \Lambda_i^*, i=1, \ldots, N,$ in the sense of Theorem \ref{thm: MSNPtheorem}.
\item there exists $X=\Gamma(Y) \in H^\infty(\frakZ(E^\sigma))$ with $\|X \| \leq 1$ such that $\hat{X}(\z_i) = \Lambda_i, i = 1, \ldots, N$, in the sense of Theorem \ref{thm: NPtheorem}.
\item $\Phi_X^{\z_i} = \Psi_{\Lambda_i}^{\z_i}, i=1, \ldots, N$.
\end{enumerate}
\end{theorem}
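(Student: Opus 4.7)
The plan is to prove the two equivalences (2) $\Leftrightarrow$ (3) and (1) $\Leftrightarrow$ (2) separately and then combine them.

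For (2) $\Leftrightarrow$ (3), I will exploit the central hypothesis on each $\z_i$. A short induction on $k$, using $a \cdot \z_i = \z_i \cdot a$ for every $a \in \sigma(M)'$, yields $(I_{E^{\otimes k}} \otimes a)\,\z_i^{(k)} = \z_i^{(k)}\,a$ and hence $(I_{\scrF(E)} \otimes a)\,C(\z_i) = C(\z_i)\,a$. Taking adjoints gives $C(\z_i)^*(I_{\scrF(E)} \otimes a) = a\,C(\z_i)^*$. Since $\rho(\varphi_\infty^\sigma(a)) = I_{\scrF(E)} \otimes a$ (recorded in the proof of Lemma~\ref{thm: equalsets}), I can compute
\[
\Phi_X^{\z_i}(a) = C(\z_i)^*(I_{\scrF(E)} \otimes a)\rho(X)C(0) = a\,\langle C(\z_i),\rho(X)C(0)\rangle = a\,\hat{X}(\z_i)^*.
\]
Meanwhile $(I_{\scrF(E)} \otimes a\Lambda_i^*)C(0)$ is concentrated in its zeroth Fock component, so a direct inspection gives $\Psi_{\Lambda_i}^{\z_i}(a) = a\,\Lambda_i^*$ without any appeal to centrality. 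Consequently $\Phi_X^{\z_i} = \Psi_{\Lambda_i}^{\z_i}$ on $\sigma(M)'$ is equivalent to $a\,\hat{X}(\z_i)^* = a\,\Lambda_i^*$ for every $a$, and setting $a = I_H$ reduces this to $\hat{X}(\z_i) = \Lambda_i$.

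For (1) $\Leftrightarrow$ (2), the norm condition is handled by $\Gamma$: because $\Gamma$ is conjugation by the unitary $\gamma_\infty$ between Fock Hilbert spaces, $\|\Gamma(Y)\| = \|Y\|$. The substantive step is the identity $\widehat{\Gamma(Y)}(\z_i) = \hat{Y}(\z_i^*)^*$ when $X = \Gamma(Y)$, which immediately converts $\hat{Y}(\z_i^*) = \Lambda_i^*$ into $\hat{X}(\z_i) = \Lambda_i$. To establish it, I plan to unwind both sides: write $\rho(\Gamma(Y)) = U(\gamma_\infty Y \gamma_\infty^{-1} \otimes I_H)U^*$, invoke the identity $\tilde{C}(\xi_i) = U^*C(\z_i)$ from Section~\ref{s: preliminaries} (with $\xi_i = \gamma^{-1}(\z_i) \in \frakZ(E)$), and exploit that $\gamma_\infty$ acts as $\sigma$ on the zeroth Fock component and as a correspondence isomorphism on the higher components. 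The $\gamma_\infty^{\pm 1}$ factors flanking $Y$ will then be absorbed via $\langle \gamma_\infty(x), \gamma_\infty(y)\rangle = \sigma(\langle x,y\rangle)$, reducing the expression to $\langle (Y \otimes I_H)\tilde{C}(0), \tilde{C}(\xi_i)\rangle$, which is $\hat{Y}(\z_i^*)^*$ by definition.

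The principal technical obstacle I anticipate is the bookkeeping required to track the various tensor-product balancings (over $M$, $\sigma(M)'$, and their centers) and to verify that $U$ restricts compatibly to the central Fock subspaces so that conjugation by $\gamma_\infty$ interacts cleanly with the framework defining $\rho$. Once that compatibility is in place, combining the two parts above yields the chain $(1) \Leftrightarrow (2) \Leftrightarrow (3)$.
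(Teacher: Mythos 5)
Your proposal is correct in substance. Your argument for (2) $\Leftrightarrow$ (3) is essentially the paper's: both reduce to the identities $\Phi_X^{\z_i}(a) = a\hat{X}(\z_i)^*$ and $\Psi_{\Lambda_i}^{\z_i}(a) = a\Lambda_i^*$, the only difference being that you obtain the first by commuting $I_{\scrF(E)}\otimes a$ past $C(\z_i)^*$ via centrality of $\z_i$, whereas the paper routes through the identity $\rho(X)^*(I_{\scrF(E)}\otimes a^*)C(\z) = (I_{\scrF(E)}\otimes\hat{X}(\z))(I_{\scrF(E)}\otimes a^*)C(\z)$ derived from Lemma \ref{thm: pteval}. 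Where you genuinely diverge is (1) $\Leftrightarrow$ (2): the paper verifies the key identity $\widehat{\Gamma(Y)}(\z_i) = \hat{Y}(\z_i^*)^*$ only on the generators $T_\xi$ and $\varphi_\infty(a)$ and then declares the general case, while you propose to prove it for arbitrary $Y$ in one stroke by conjugating with the unitary $\scrF(\frakZ(E))\otimes_\sigma H \to \scrF(\frakZ(E^\sigma))\otimes_\iota H$ induced by $\gamma_\infty$. Your route costs more bookkeeping but buys something real: the identity being checked is conjugate-linear in $Y$ rather than multiplicative, so agreement on generators only yields agreement on their linear span, and passing to products and ultraweak limits requires precisely the kind of global argument you outline. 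Two small repairs to your plan: $U^*C(\z_i)$ equals $\tilde{C}(\z_i)$, the Cauchy kernel of $\z_i$ taken in $\scrF(E^\sigma)\otimes_\iota H$, not $\tilde{C}(\xi_i)$; to reach $\xi_i = \gamma^{-1}(\z_i)$ you need a second unitary, the one induced by $\gamma_\infty^{-1}$, after which the resulting kernel in $\scrF(\frakZ(E))\otimes_\sigma H$ embeds isometrically into $\scrF(E)\otimes_\sigma H$ as $C(\z_i)$ and yields $\hat{Y}(\z_i^*)^*$. Also, $\gamma_\infty$ is an isomorphism of correspondences rather than a Hilbert-space unitary, but conjugation by it is still a $*$-isomorphism of $\scrL(\scrF(\frakZ(E)))$ onto $\scrL(\scrF(\frakZ(E^\sigma)))$, so $\|\Gamma(Y)\| = \|Y\|$ as you need.
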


\begin{proof}
First we prove (1) is equivalent to (2).  Suppose $\xi \in \frakZ(E)$ and $Y = T_\xi \in H^\infty(\frakZ(E))$. By definition of point evaluation in Theorem \ref{thm: MSNPtheorem}, we have
\begin{eqnarray*}
\hat{Y}(\z_i^*)^* = \langle(Y \otimes I_H)C(0), C(\z_i) \rangle = \langle (T_\xi \otimes I_H) C(0), C(\z_i) \rangle
= ({T_\xi^{(1)}}^* \otimes I_H) \z_i,
\end{eqnarray*}
where $T_\xi^{(1)}$ is defined after equation \eqref{eq: creationop}.
Since $\frakZ(E)$ is isomorphic to $\frakZ(E^\sigma)$ via $\gamma,$ there exists $\alpha_i \in \frakZ(E)$ such that $\z_i = L_{\alpha_i}$.  Thus for $h \in H$,
\begin{eqnarray*}
({T_\xi^{(1)}}^* \otimes I_H) \z_i h = {T_\xi^{(1)}}^* (\alpha_i) \otimes h = \langle \xi, \alpha_i \rangle \otimes h= \sigma(\langle \xi, \alpha_i \rangle)h.
\end{eqnarray*}
Now for $X= T_{\gamma(\xi)} \in H^\infty(\frakZ(E^\sigma))$, by definition of point evaluation in equation \eqref{eq: pteval} we see that 
\begin{multline*}
\hat{X}(\z_i)h = \langle U(X \otimes I_H) U^* C(0), C(\z_i) \rangle h = \langle (T_{\gamma(\xi)} \otimes I_H) U^*C(0), U^*C(\z_i) \rangle h \\
 = ({T_{\gamma(\xi)}^{(1)}}^* \otimes I_H) L_{\z_i} h = {T_{\gamma(\xi)}^{(1)}}^*(\z_i) \otimes h = \langle \gamma(\xi), \z_i \rangle \otimes h = L_\xi^* (\alpha_i \otimes h)  \\
= \sigma(\langle \xi, \alpha_i \rangle) h= \hat{Y}(\z_i^*)^* h.
\end{multline*}
Thus $\hat{Y}(\z_i^*) = \Lambda_i^*$ if and only if $\hat{X}(\z_i)= \Lambda_i$, where $X = \Gamma(Y) \in H^\infty(\frakZ(E^\sigma))$.  Similarly, if $a \in \frakZ(M)$ and $Y = \varphi_\infty(a) \in H^\infty(\frakZ(E))$, then a quick calculation shows that $\hat{Y}(\z_i^*)^* = \sigma(a)^*$.  Moreover, if $X = \varphi_\infty^\sigma(\sigma(a)) \in H^\infty(\frakZ(E^\sigma))$, then $\hat{X}(\z_i) = \sigma(a)^*$ as well.  So again we have that $\hat{Y}(\z_i^*) = \Lambda_i^*$ if and only if $\hat{X}(\z_i)= \Lambda_i$, where $X = \Gamma(Y) \in H^\infty(\frakZ(E^\sigma))$.  Since the equivalence holds for the generators of $H^\infty(\frakZ(E))$ and  $H^\infty(\frakZ(E^\sigma))$, we conclude that (1) is equivalent to (2).

The following calculation shows that (2) and (3) are equivalent. Note that the proof of Lemma \ref{thm: pteval} shows that for all $X \in H^\infty(E^\sigma)$, all $\z \in \mathfrak{Z}(E^\sigma)$ with $\| \z \|<1$, and all $a \in \sigma(M)'$,
\begin{equation*}
\rho(X)^*\rho(\varphi_\infty^\sigma(a^*))C(\z) = \rho(X)^*(I_{\scrF(E)} \otimes a^*)C(\z) = (I_{\scrF(E)} \otimes \hat{X}(\z))(I_{\scrF(E)} \otimes a^*)C(\z).
\end{equation*} 
Thus we have
\begin{multline*}
\Phi_X^{\z_i}(a) = \langle C(\z_i), \rho(\varphi_\infty^\sigma(a)) \rho(X)C(0) \rangle = \langle \rho(X)^* \rho(\varphi_\infty^\sigma(a^*)) C(\z_i), C(0) \rangle \\
= \langle(I_{\scrF(E)} \otimes \hat{X}(\z_i))(I_{\scrF(E)} \otimes a^*)C(\z_i), C(0) \rangle = \langle C(\z_i), (I_{\scrF(E)} \otimes a\hat{X}(\z_i)^* C(0) \rangle, 
\end{multline*}
which is equal to $\Psi_{\Lambda_i}^{\z_i}(a)$ if and only if $\hat{X}(\z_i) = \Lambda_i$ for $i=1, \ldots, N$.
\end{proof}

We note that in \cite{Ball2010}, Ball and ter Horst also addressed the issues we discussed here.  Their context is not quite as general as ours and it is formulated somewhat differently, but it may be possible to extend their arguments to our setting.  The first condition in \cite[Theorem 3.3]{Ball2010} seems related to Muhly and Solel's theorem via an application of Choi's criterion \cite[Theorem 2]{Choi1975}.  Consequently, the relationship between Muhly and Solel's Theorem 5.3 in \cite{Muhly2004a} and Constantinescu and Johnson's Theorem 3.4 in \cite{Constantinescu2003} is illuminated further by their theorem.

\section*{acknowledgements}
I would like to thank Joe Ball for his insight into the connection between Constantinescu and Johnson's result and Muhly and Solel's; Jenni Good for her meticulous explanations and observations; and Paul Muhly and Baruch Solel for their astute feedback.

\bibliographystyle{plain}
\bibliography{masterbibliography}

\end{document}